\newtheorem*{theorem*}{Theorem}
\newtheorem{lemma}{Lemma}[subsection]
\newtheorem{remark}[lemma]{Remark}
\newtheorem*{conjecture*}{Conjecture}
\newtheorem{thm}[lemma]{Theorem}
\newtheorem{prop}[lemma]{Proposition}
\newtheorem{lem}[lemma]{Lemma}
\newtheorem{defn}[lemma]{Definition}
\newtheorem{notn}[lemma]{Notation}
\newtheorem{cor}[lemma]{Corollary}
\newtheorem{rem}[lemma]{Remark}
\newtheorem{introtheorem}{Theorem}
\newtheorem{introprop}[introtheorem]{Proposition}
\sloppy \theoremstyle{plain}
\newcommand{\de}{\delta}
\newcommand{\Hom}{\operatorname{Hom}}
\newcommand{\Irr}{\operatorname{Irr}}
\newcommand{\Gr}{\operatorname{Gr}}
\newcommand{\rr}{\mathbb{R}}
\renewcommand{\Im}{\operatorname{Im}}
\newcommand{\Ker}{\operatorname{Ker}}
\newcommand{\Coker}{\operatorname{Coker}}
\newcommand{\cP}{{\mathcal P}}
\newcommand{\cA}{{\mathcal A}}
\newcommand{\cB}{{\mathcal B}}
\newcommand{\C}{{\mathbb C}}
\newcommand{\Span}{{\operatorname{Span}}}
\newcommand{\cD}{{\mathcal{D}}}
\newcommand{\g}{{\mathfrak{g}}}
\newcommand{\GL}{\operatorname{GL}}
\newcommand{\gl}{{\mathfrak{gl}}}
\newcommand{\Sc}{{\mathcal S}}
\newcommand{\cM}{\mathcal{M}}
\newcommand{\cF}{\mathcal{F}}
\newcommand{\cG}{\mathcal{G}}
\newcommand{\cH}{\mathcal{H}}
\begin{document}

\author{Avraham Aizenbud}
\address{Avraham Aizenbud, Faculty of Mathematics
and Computer Science, The Weizmann Institute of Science POB 26,
Rehovot 76100, ISRAEL.} \email{aizenr@yahoo.com}
\author{Dmitry Gourevitch}
\address{Dmitry Gourevitch,
School of Mathematics,
Institute for Advanced Study,
Einstein Drive, Princeton, NJ 08540 USA}
\email{dimagur@ias.edu}
\date{\today}

\title[Multiplicity free Jacquet modules]{Multiplicity free Jacquet modules}
\keywords{Multiplicity one, Gelfand pair, invariant distribution, finite group\\
\indent MSC2010 Classification: 20G05, 20C30, 20C33, 46F10, 47A67}
%
%
%
%
%
%
%
%
%
%
\begin{abstract}
Let $F$ be a non-Archimedean local field or a finite field.
Let $n$ be a natural number and $k$ be $1$ or $2$.
Consider $G:=\GL_{n+k}(F)$ and let $M:=\GL_n(F) \times GL_k(F)<G$ be a maximal Levi subgroup. Let $U< G$ be the corresponding unipotent subgroup and let $P=MU$ be the corresponding parabolic subgroup.
Let $J:=J_M^G: \cM(G) \to \cM(M)$ be the Jacquet functor (i.e. the functor of coinvariants w.r.t. $U$).
In this paper we prove that $J$ is a multiplicity free functor, i.e.
$$\dim \Hom_M(J(\pi),\rho)\leq 1,$$
for any irreducible representations $\pi$ of $G$ and $\rho$ of $M$.

To do that we adapt the classical method of Gelfand and Kazhdan that  proves "multiplicity free" property of certain representations to prove "multiplicity free" property of certain functors.

At the end we discuss whether other Jacquet functors are multiplicity free.
\end{abstract}

\maketitle

\tableofcontents

\section{Introduction}
Let $F$ be a non-Archimedean local field or a finite field.
Let $n$ be a natural number and $k$ be $1$ or $2$.
Consider $G:=\GL_{n+k}(F)$ and let $M:=\GL_n(F) \times GL_k(F)<G$ be a maximal Levi subgroup. Let $U< G$ be the corresponding unipotent subgroup and let $P=MU$ be the corresponding parabolic subgroup.
Let $J:=J_M^G: \cM(G) \to \cM(M)$ be the Jacquet functor (i.e. the functor of coinvariants w.r.t. $U$).
We will fix the notations $F,n,G,M$ and $U$ throughout the paper.

In this paper we prove the following theorem.
\begin{introtheorem} \label{thm:PiRho}
Let $\pi$ be an irreducible representation of $G$ and $\rho$ be an irreducible representation of $M$. Then $$\dim \Hom_M(J(\pi),\rho)\leq 1.$$
\end{introtheorem}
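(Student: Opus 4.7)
The plan is to reduce the question to commutativity of a certain intertwining (Hecke) algebra and to establish this by adapting the Gelfand--Kazhdan bilinear-form technique. By the adjunction between the Jacquet functor and parabolic induction,
$$\Hom_M(J(\pi),\rho)\cong\Hom_G(\pi,\Ind_P^G\rho),$$
the statement is equivalent to the multiplicity-freeness of $\Ind_P^G\rho$. By standard Bruhat-type analysis, this in turn follows from commutativity of the intertwining algebra $\End_G(\Ind_P^G\rho)$, which admits a Bruhat filtration indexed by the double cosets in $P\backslash G/P$ and can be realized as a convolution algebra of $(P,\rho)$-bi-equivariant distributions on $G$.

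For $k=1$ the double coset space $P\backslash G/P$ has exactly two elements (the closed cell and the open Bruhat cell), so $\End_G(\Ind_P^G\rho)$ has dimension at most two and is automatically commutative; the theorem follows immediately. For $k=2$ there are three double cosets, so the algebra can have dimension up to three, and commutativity must be established separately -- this is the case where the Gelfand--Kazhdan adaptation is essential.

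The natural candidate for the anti-involution is the transpose $\sigma(g)=g^t$, which by the classical Gelfand--Kazhdan theorem satisfies $\pi^\sigma\cong\pi^\vee$ for every irreducible representation of any $\GL_m$; in particular $\rho^\sigma\cong\rho^\vee$ on $M=\GL_n\times\GL_k$. The immediate obstruction is that $\sigma(P)=P^{\mathrm{op}}$ rather than $P$; this is resolved either by composing with a group element conjugating $P^{\mathrm{op}}$ back to $P$, or by inserting the long intertwining operator $\Ind_P^G\rho\to\Ind_{P^{\mathrm{op}}}^G\rho$ into the matrix-coefficient setup. With this in hand, the standard Gelfand--Kazhdan calculation $(f_1\ast f_2)^\sigma=f_2^\sigma\ast f_1^\sigma=f_2\ast f_1$ yields commutativity of the Hecke algebra, provided every bi-equivariant distribution is $\sigma$-invariant.

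The technical heart of the proof is precisely this $\sigma$-invariance of the relevant distributions, which I would verify orbit-by-orbit on $P\backslash G/P$. On the extreme cells $\sigma$-invariance is either automatic (closed cell) or reduces via stabilizer analysis to the classical Gelfand--Kazhdan theorem on a smaller $\GL$-group appearing as a Levi component of a stabilizer. The principal obstacle is the ``middle'' double coset for $k=2$, where the stabilizer structure is more intricate; handling this orbit is where the paper's adaptation of Gelfand--Kazhdan from the representation to the functorial/Jacquet setting becomes essential.
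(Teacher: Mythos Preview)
Your route differs from the paper's and has genuine gaps. Two concrete obstructions: (a) For $n\neq k$ the opposite parabolic $P^{\mathrm{op}}$ stabilizes a $k$-plane while $P$ stabilizes an $n$-plane, so $P^{\mathrm{op}}$ is \emph{not} $G$-conjugate to $P$ and your first proposed fix for the anti-involution is simply unavailable; the intertwining-operator alternative is not an isomorphism in general and would need its own argument. (b) Over a local field, commutativity of $\End_G(\Ind_P^G\rho)$ does not imply $\dim\Hom_G(\pi,\Ind_P^G\rho)\le 1$: a nonsplit extension $0\to\pi\oplus\pi\to V\to\tau\to 0$ with $\tau\not\cong\pi$ and extension class $(e_1,e_2)\in\operatorname{Ext}^1(\tau,\pi)^2$ with $e_1,e_2$ linearly independent has $\End(V)=\C$ yet $\dim\Hom(\pi,V)=2$. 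Moreover, the bound on $\dim\End_G(\Ind_P^G\rho)$ by the number of $(P,P)$ double cosets is unjustified for non-supercuspidal $\rho$: by the geometric lemma each cell contributes a $\Hom$-space between Jacquet modules of $\rho$, not a single line. Finally, you leave the decisive case—the middle double coset for $k=2$—explicitly unhandled.

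The paper avoids all of this by never fixing $\rho$ and never working on $P\backslash G/P$. It shows instead that $(G\times M,P)$ is a Gelfand pair, equivalently that $\cH(G/U)$ is multiplicity-free over $G\times M$; here transpose sends $G/U$ to $U^t\backslash G$, so no $P$-preserving anti-involution is needed. The bilinear-form version of Gelfand--Kazhdan then reduces everything to a purely geometric statement: every $G$-conjugacy class of pairs $(A,B)\in\operatorname{Mat}_{n+k}^2$ with $AB=BA=0$, $\operatorname{rk}A=n$, $\operatorname{rk}B=k$ is transpose-invariant. This is proved by induction on $n$, using the Deligne weight filtration attached to the nilpotent part of $A$; the induction bottoms out in the Key Lemma that every $M'$-orbit on $U'^t\backslash\GL_k/U'$ (for a Levi $M'\subset\GL_k$ with unipotent radical $U'$) is transpose-invariant—a direct computation because $k\le 2$, and exactly the step that fails for $k\ge 3$.
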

As we will show in \S \ref{sec:ImpRes}, this theorem is equivalent to the following one.
\begin{introtheorem} \label{thm:Sc}
Let $G\times M$ act on $G/U$ by $(g,m)([g'])=[g g' m^{-1}].$ This action is well defined since $M$ normalizes $U$.
Consider the space of Schwartz  measures $\cH(G/U)$ (i.e. compactly supported measures which are locally constant w.r.t. the action of $G$) as a representation of $G\times M$.
Then this representation is multiplicity free, i.e. for any irreducible representation $\pi$ of $G\times M$ we have
$$\dim \Hom_{G \times M}(\cH(G/U),\pi)\leq 1.$$
\end{introtheorem}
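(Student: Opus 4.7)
The plan is to adapt the classical Gelfand--Kazhdan strategy to this bi-equivariant situation. The multiplicity-one claim $\dim \Hom_{G \times M}(\cH(G/U), \pi) \le 1$ (for every irreducible $\pi$ of $G \times M$) will follow once we exhibit an anti-involution $\tau$ of $G \times M$ together with an involution $\phi$ of $G/U$ such that:
\begin{itemize}
\item $\phi(k \cdot x) = \tau(k) \cdot \phi(x)$ for all $k \in G \times M$ and $x \in G/U$;
\item $\pi^{\tau} \cong \tilde\pi$ for every irreducible representation $\pi$ of $G \times M$; and
\item every $(G \times M)$-equivariant distribution on the relevant ``double space'' encoding the Hom pairing is $\phi$-invariant.
\end{itemize}

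For the first two points the natural building block on each $\GL$-factor is transpose $g \mapsto g^T$, since by the Gelfand--Kazhdan theorem this realizes the contragredient on irreducibles of any $\GL_m(F)$. Since transpose sends $U$ to the opposite unipotent radical, it must be twisted by inversion and by conjugation by a Weyl element $w_0$ that restores the correct triangularity while preserving the Levi decomposition $M = \GL_n \times \GL_k$. I would therefore try $\phi([g]) = [w_0 g^{-T} w_0^{-1}]$ for an appropriate $w_0$, and determine $\tau$ on $G \times M$ by the compatibility requirement; the identity $\pi^\tau \cong \tilde\pi$ then reduces to two applications of the Gelfand--Kazhdan theorem, one for $\GL_n$ and one for $\GL_k$.

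The main step, and the expected main obstacle, is the third item: showing invariance of every equivariant distribution under $\phi$. Here I would stratify the relevant double space by $(G \times M)$-orbits. For $k \in \{1,2\}$ this orbit list is short and explicit, being closely tied to $P \backslash G / P$ via the $U$-quotient. On each orbit one applies Frobenius descent (Bernstein's localization principle in the $p$-adic case, standard finite-group averaging in the finite-field case) to reduce equivariant distributions to distributions on the stabilizer of a basepoint, and then verifies directly that $\phi$ acts trivially on these.

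The hardest case will be the open orbit, where the stabilizer is smallest and a residual analysis on nilpotent pieces of $\mathfrak{g}^*$ is likely needed, in the spirit of the original Gelfand--Kazhdan proof for the Bessel pair $(\GL_{n+1}, \GL_n)$. The restriction $k \le 2$ is precisely what keeps the orbit combinatorics and the stabilizer analysis tractable; for larger $k$ the number of $P \backslash G / P$-orbits grows with $k$, the open-orbit stabilizer becomes richer, and one expects the analogous ``multiplicity free'' property of $J$ to fail in general -- consistent with the discussion promised at the end of the paper.
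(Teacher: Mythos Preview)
Your Gelfand--Kazhdan framework is correct and is what the paper uses, but the execution in the paper differs from your sketch in two places, one cosmetic and one substantive.

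Cosmetically: no Weyl-element twist is needed. Instead of an involution on a single copy of $G/U$, the paper pairs $\Sc(G/U)$ with $\Sc(\kappa(G/U))\cong\Sc(U^t\backslash G)$, realized concretely as the spaces $E_{n,k}$ and $E_{k,n}$ of short exact sequences $0\to F^n\to F^{n+k}\to F^k\to 0$ and $0\to F^k\to F^{n+k}\to F^n\to 0$. The involution on $E_{n,k}\times E_{k,n}$ is then plain swap-and-transpose, and $\widetilde\pi\cong\pi^\kappa$ comes straight from Gelfand--Kazhdan on each $\GL$-factor.

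Substantively: no Frobenius descent and no analysis of distributions on stabilizers or on $\g^*$ is ever performed. Because the involution normalizes the $(G\times M)$-action, the Bernstein--Gelfand--Kazhdan--Zelevinsky criterion reduces everything to the purely geometric claim that the involution preserves every $(G\times M)$-orbit on $E_{n,k}\times E_{k,n}$. Via the composition map $((\phi_1,\psi_1),(\phi_2,\psi_2))\mapsto(\phi_2\psi_1,\phi_1\psi_2)$ these orbits biject with $G$-conjugacy classes of pairs $(A,B)$ of $(n{+}k)\times(n{+}k)$ matrices with $AB=BA=0$, $\rk A=n$, $\rk B=k$, and the statement becomes: every such pair is $G$-conjugate to $(A^t,B^t)$. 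So the core is linear algebra, not distribution theory.

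The point where $k\le 2$ actually enters is therefore not an open-orbit stabilizer count tied to $P\backslash G/P$. After splitting off the invertible part of $A$ (induction on $n$) one is left with $A$ nilpotent and $B$ inducing an isomorphism $B':\Coker A\to\Ker A$ between two $k$-dimensional spaces. Using the Deligne filtration, the image of the centralizer $G_A$ in $\GL(\Ker A)\times\GL(\Coker A)$ is identified with the subgroup $\cP_A$ of pairs preserving the induced filtrations on both sides and agreeing on the associated gradeds. The Key Lemma --- a one-line computation for $k\le 2$, false for $k\ge 3$ --- is that every orbit of such a coupled-parabolic subgroup on $\Hom(F^k,F^k)$ is transpose-invariant; equivalently, every $M'$-orbit on $U'^t\backslash\GL_k/U'$ is transpose-invariant. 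That is the whole content of the restriction on $k$.
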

By Frobenius reciprocity, this theorem is in turn equivalent to the following one.
\begin{introtheorem} \label{thm:GP}
Consider $P$ to be diagonally embedded in $G \times M$. Then the pair $(G \times M,P)$ is a Gelfand pair i.e. for any irreducible representation $\pi$ of $G\times M$ we have
$$\dim \Hom_{P}(\pi, \C)\leq 1.$$
\end{introtheorem}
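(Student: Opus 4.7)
\emph{Proof plan.}
The plan is to prove Theorem~\ref{thm:GP}, the Gelfand pair assertion for $(G\times M,\,P_{\mathrm{diag}})$; by the equivalences established in the paper this will imply Theorems~\ref{thm:PiRho} and~\ref{thm:Sc}. The approach is the Gelfand--Kazhdan method in an adapted form, as advertised in the abstract. In its classical form the criterion would ask for an anti-involution $\sigma$ of $G\times M$ preserving $P_{\mathrm{diag}}$ and such that every $P_{\mathrm{diag}}$-bi-invariant distribution on $G\times M$ is $\sigma$-invariant; it is this first requirement that must be loosened, since the natural candidate for $\sigma$ fails to preserve $P_{\mathrm{diag}}$.

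First I would try the obvious candidate, componentwise transposition $\sigma(g,(m_1,m_2)):=(g^{t},(m_1^{t},m_2^{t}))$. This is an anti-involution of $G\times M$ that preserves the diagonally embedded Levi $M$, but it sends the unipotent radical $U$ to its opposite $U^{-}$, and hence maps $P_{\mathrm{diag}}$ to $P^-_{\mathrm{diag}}$ rather than to itself. When $n\ne k$, no element of $G$ conjugates $P^{-}$ back to $P$ while preserving the ordered Levi, so the classical criterion does not apply unmodified. The adaptation, I expect, must allow $\sigma$ to interchange $P_{\mathrm{diag}}$ and $P^-_{\mathrm{diag}}$ and then compare the two resulting spaces of bi-invariant distributions; the canonical comparison should come from the natural intertwining between $U$-coinvariants and $U^{-}$-coinvariants (Casselman-style second adjointness), and the upgraded criterion should still imply the required multiplicity one.

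Granted the adapted criterion, the body of the proof is to verify that every $(P_{\mathrm{diag}},P_{\mathrm{diag}})$-bi-invariant distribution on $G\times M$ has the prescribed symmetry. I would stratify $G\times M$ by $(P\times P)$-orbits; using $(G\times M)/P_{\mathrm{diag}}\cong G/U$ these are in bijection with $P$-orbits on $G/U$, which are governed by the Bruhat decomposition $G=\bigsqcup_{w\in W_M\backslash W_G/W_M}PwP$. For $(G,M)=(\GL_{n+k},\GL_n\times\GL_k)$ this has $\min(n,k)+1$ cells: two (open and closed) when $k=1$, three (open, intermediate, closed) when $k=2$. On each cell I would identify the stabilizer of a representative and verify the required symmetry of any distribution supported there. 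The closed cell is routine; for $k=2$ the intermediate cell should be handled by induction on $n$, reducing to a Gelfand pair statement for a smaller general linear group. The hard step, where I expect the main obstacle to lie, is the open cell: here the stabilizer is smallest and the distributional symmetry is farthest from automatic, so this step will likely require Harish--Chandra descent to the Lie algebra together with an analysis of nilpotent orbit data to complete the argument.
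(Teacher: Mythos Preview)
Your overall strategy---a Gelfand--Kazhdan style argument---matches the paper's, and you correctly diagnose the obstruction that transposition sends $P$ to $P^{-}$. However, the execution you propose diverges significantly from the paper's and is vague at the decisive points.

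\medskip
\textbf{How the paper actually proceeds.} The paper does not prove Theorem~\ref{thm:GP} directly; it proves Theorem~\ref{thm:Sc} and obtains \ref{thm:GP} by the equivalences. The ``adapted criterion'' is not an intertwining/second-adjointness mechanism: it is the elementary generalization of Gelfand--Kazhdan in which one allows two different $H$-spaces $X,Y$ related by a homeomorphism $\tau$ (Theorem~2.4.1), combined with the fact that $\widetilde{\pi}\cong\kappa(\pi)$ for products of general linear groups. Concretely, one identifies $G/U\cong E_{n,k}$ (short exact sequences $0\to F^n\to F^{n+k}\to F^k\to 0$) and observes that transposition gives $\kappa(E_{n,k})\cong E_{k,n}$. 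The swap involution on $E_{n,k}\times E_{k,n}$ is the relevant $T$, and by the Bernstein--Gelfand--Kazhdan--Zelevinsky criterion (Corollary~2.5.2) it suffices to show that every $G\times M$-orbit on $E_{n,k}\times E_{k,n}$ is $T$-stable---a purely geometric statement, no stratum-by-stratum distributional analysis at all.

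The crucial step you are missing is the composition map
\[
C_{n,k}:E_{n,k}\times E_{k,n}\longrightarrow X_{n,k}:=\{(A,B)\in \mathrm{Mat}_{n+k}^2\mid AB=BA=0,\ \mathrm{rk}\,A=n,\ \mathrm{rk}\,B=k\},
\]
sending $((\phi_1,\psi_1),(\phi_2,\psi_2))$ to $(\psi_2\phi_1,\psi_1\phi_2)$. This collapses the $M$-action and gives a bijection between $G\times M$-orbits on the source and $G$-conjugacy orbits on $X_{n,k}$, intertwining $T$ with the componentwise transpose $\theta$. The problem is thus reduced to: every $G$-orbit on $X_{n,k}$ is $\theta$-stable. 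This is proved by induction on $n$; after splitting off the invertible part of $A$, one may assume $A$ nilpotent, and then the Deligne (weight) filtration identifies the image of $G_A\to \GL(\Ker A)\times\GL(\Coker A)$ with a parabolic-type subgroup $\mathcal P_A$. The remaining assertion is exactly the Key Lemma: for $k\le 2$, every $P''$-orbit on $\GL_k$ is transpose-invariant, a one-line check. This is where---and only where---the hypothesis $k\le 2$ enters.

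\medskip
\textbf{Where your plan is problematic.} Your proposed adapted criterion via Casselman-style second adjointness is (i) never made precise, and (ii) specific to non-Archimedean local fields, whereas the theorem is stated and proved uniformly for finite fields as well. Your Bruhat-cell stratification and per-cell distributional analysis are unnecessary once one has the orbit-preservation reduction; and your open-cell step (``Harish--Chandra descent\ldots nilpotent orbit data'') is a placeholder rather than an argument. The paper's route avoids all of this by the single geometric reduction to $X_{n,k}$.
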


Theorem \ref{thm:PiRho} implies also the following theorem.
\begin{introtheorem} \label{thm:GL}
Suppose $k=1$ and let $H=GL_n(F)$ be standardly embedded inside $G$. Let $\pi$ be an irreducible representation of $G$ and $\rho$ be an irreducible representation of $H$.
Then $$\dim \Hom_H(J(\pi),\rho)\leq 1.$$
\end{introtheorem}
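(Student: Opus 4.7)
The plan is to deduce Theorem \ref{thm:GL} directly from Theorem \ref{thm:PiRho} via a short central-character bookkeeping argument, with no new Gelfand--Kazhdan input required. Under the standard embedding, $H = \GL_n(F)$ sits inside $M = \GL_n(F) \times \GL_1(F)$ as the factor $\GL_n(F) \times \{1\}$, with complementary $\GL_1$-factor $A := \{(I_n, w) : w \in F^\times\}$. The key geometric observation is that $Z(G) = \{zI_{n+1} : z \in F^\times\}$, viewed as a subgroup of $M$, is the ``antidiagonal'' $\{(zI_n, z)\}$; it meets $H$ trivially but projects isomorphically onto $A$ via the second coordinate. Consequently $M = Z(G) \cdot H$, and explicitly any $(I_n, w) \in A$ factors as $(wI_n, w) \cdot (w^{-1}I_n, 1) \in Z(G) \cdot H$.

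Fix $\pi \in \Irr(G)$ and $\rho \in \Irr(H)$ with central characters $\omega_\pi, \omega_\rho : F^\times \to \C^\times$. Since $Z(G)$ centralizes $U$, it acts on $J(\pi) = \pi_U$ by the scalar $\omega_\pi$. For any $f \in \Hom_H(J(\pi), \rho)$, any $v \in J(\pi)$, and any $w \in F^\times$, the factorization above gives
\[
f\bigl((I_n, w)\, v\bigr) \;=\; \omega_\pi(w)\, f\bigl((w^{-1}I_n, 1)\, v\bigr) \;=\; \omega_\pi(w)\, \omega_\rho(w)^{-1}\, f(v),
\]
where the second equality uses that $(w^{-1}I_n, 1) \in Z(H) \subset H$ acts on $\rho$ via $\omega_\rho$. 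Hence every $H$-equivariant map $f$ is automatically equivariant for the full Levi $M$, provided the $A$-factor acts on the target through the character $\chi := \omega_\pi \omega_\rho^{-1}$. Equivalently,
\[
\Hom_H\bigl(J(\pi), \rho\bigr) \;=\; \Hom_M\bigl(J(\pi),\, \rho \boxtimes \chi\bigr).
\]

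Now $\rho \boxtimes \chi$ is an irreducible representation of $M = \GL_n(F) \times \GL_1(F)$, so Theorem \ref{thm:PiRho} applied to the pair $(\pi, \rho \boxtimes \chi)$ bounds the right-hand side by $1$, finishing the proof. I expect no real obstacle: the entire argument is the factorization $M = Z(G) \cdot H$ together with standard central-character behaviour of $\pi_U$. The only point one should verify with care is that the identification of $\Hom$-spaces above is an \emph{equality}, not merely an inclusion --- but this is exactly what the computation shows, since every $H$-map is \emph{forced} to satisfy the extra equivariance relation under $A$.
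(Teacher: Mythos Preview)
Your argument is correct and is essentially the same as the paper's: both observe that $M = Z(G)\cdot H$ with $Z(G)\cap H = \{1\}$, extend $\rho$ to $M$ so that $Z(G)$ acts by the central character $\omega_\pi$, and then note that any $H$-map $J(\pi)\to\rho$ is automatically an $M$-map to this extension, whence Theorem~\ref{thm:PiRho} applies. The only cosmetic difference is that the paper uses the decomposition $M\cong Z(G)\times H$ directly, while you work in the coordinates $M=\GL_n\times\GL_1=H\times A$ and identify the same extension as $\rho\boxtimes(\omega_\pi\omega_\rho^{-1})$; these are two descriptions of one representation.
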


We will prove the implications mentioned above between theorems \ref{thm:PiRho}, \ref{thm:Sc}, \ref{thm:GP} and \ref{thm:GL} in \S \ref{sec:ImpRes}.

\subsection{A sketch of the proof}  $ $

Using a version of the Gelfand-Kazhdan criterion we deduce Theorem \ref{thm:Sc} from the following one
\begin{introtheorem}
Any distribution on $(U^t \setminus G) \times (G/U)$ which is invariant with respect to the action of $G\times M$ given by $(g,m)([x],[y]):=([mxg^{-1}],[gym^{-1}])$ is also invariant with respect to the involution $([x],[y]) \mapsto ([y^t],[x^t])$.
\end{introtheorem}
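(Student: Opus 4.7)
The plan is to first reduce the statement to a simpler claim about distributions on $U^t \setminus G / U$. Consider the map
\[
\mu : (U^t \setminus G) \times (G/U) \longrightarrow U^t \setminus G / U, \qquad \mu(U^t x_0, y_0 U) := U^t x_0 y_0 U,
\]
which is $G$-invariant and $M$-equivariant when $M$ acts on the target by conjugation. A short computation shows that $G$ acts transitively on each fibre of $\mu$: if $(x,y)$ and $(x_0,y_0)$ lie in the same fibre, one may adjust representatives so that $xy=x_0 y_0$ holds exactly, and then $g := x_0^{-1} x \in G$ satisfies $(g,1)\cdot(x,y)=(x_0,y_0)$. Pushforward by $\mu$ therefore identifies $(G\times M)$-invariant distributions on $(U^t\setminus G)\times (G/U)$ with $M$-conjugation-invariant distributions on $U^t \setminus G/U$. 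Since $\sigma(x,y)=(y^t,x^t)$ intertwines with the transposition involution $g \mapsto g^t$ on $U^t \setminus G/U$ (using $(U^t g U)^t = U^t g^t U$), the theorem is equivalent to the following statement: every $M$-conjugation-invariant distribution on $U^t \setminus G/U$ is transposition-invariant.

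To prove this equivalent form I would stratify $U^t \setminus G/U$ into its $M$-conjugation orbits, verify that each orbit is transposition-stable, and then establish transposition-invariance of the distributions orbit by orbit via the standard Bernstein--Frobenius descent argument. For $k\in\{1,2\}$ the Bruhat decomposition of $G$ relative to $P$ yields a short, explicit list of $U^t$--$U$ double cosets; for each orbit one exhibits an element of $M$ (typically a product of a Weyl-type element and a toral element) conjugating a chosen representative to its transpose.

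For each $M$-orbit $O$, pick a point $p$ with stabilizer $H_p$. Frobenius descent identifies the space of $M$-equivariant distributions supported on $\overline O$ modulo those supported on $\overline O \setminus O$ with the space of $H_p$-equivariant distributions on the normal cotangent tower at $p$. The transposition $t$ normalizes a suitable conjugate of $H_p$ (this is exactly the compatibility $\sigma\circ(g,m)\circ\sigma = ((g^t)^{-1},(m^t)^{-1})$ passed through the reduction) and therefore acts on that tower. The task becomes showing that $t$ acts trivially on every $H_p$-invariant functional.

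The main obstacle is the deeper, non-open strata, where $H_p$ is nontrivial and may produce equivariant distributions of higher transverse order. These should be handled by linearization, as in the classical Gelfand--Kazhdan argument for $(\GL_{n+1},\GL_n)$: describe the normal cotangent space explicitly as an $H_p$-representation via the root system of $G$ relative to $P$, decompose into weight spaces, and verify directly that $t$ acts by $+1$ on each $H_p$-invariant weight line. The hypothesis $k\in\{1,2\}$ keeps the list of strata and the structure of the stabilizers restricted enough that a finite, explicit case analysis should close the proof; I expect this linearized step on the smallest orbit to be the principal technical hurdle.
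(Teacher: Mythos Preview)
Your reduction via $\mu$ to the statement ``every $M$-conjugation orbit on $U^t\backslash G/U$ is transposition-invariant'' is correct and is equivalent to the paper's geometric statement (Proposition~\ref{thm:Geo}); the paper reaches the same orbit-preservation problem by mapping $E_{n,k}\times E_{k,n}$ to $X_{n,k}$ instead. Note, however, that you do not need the pushforward identification of \emph{distributions}: once you know that every $(G\times M)$-orbit upstairs is $\sigma$-invariant, Corollary~\ref{cor:BGKZ} gives the invariance of distributions directly, so the entire Frobenius-descent discussion about normal cotangent towers and higher transverse order is superfluous.

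The genuine gap is in your plan to prove the reduced statement. The claim that ``the Bruhat decomposition of $G$ relative to $P$ yields a short, explicit list of $U^t$--$U$ double cosets'' is false: the open cell $U^t M U$ already contributes a copy of $M\cong \GL_n(F)\times \GL_k(F)$ to $U^t\backslash G/U$, so the double cosets form continuous families, and the $M$-conjugation orbits involve the full conjugacy-class structure of $\GL_n(F)$. There is no finite case analysis here for general $n$. What you are proposing is essentially the Key Lemma~\ref{lem:Key}, but stated for $\GL_{n+k}$ rather than $\GL_k$; the paper's actual work is precisely the reduction from $\GL_{n+k}$ to $\GL_k$. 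That reduction uses the model $X_{n,k}$, Jordan decomposition of $A$ to induct on $n$, and, in the nilpotent case, the Deligne filtration (Lemmas~\ref{lem:nuAPA} and~\ref{lem:DualKey}) to identify the residual freedom in the $G_A$-action on $B$ with a $\cP$-action on $\Hom(\Coker A,\Ker A)\cong\GL_k$. Only after this reduction does one arrive at a genuinely finite check, valid because $k\le 2$. Your proposal is missing this entire mechanism.
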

By the method of Bernstein-Gelfand-Kazhdan-Zelevinski (Theorem \ref{thm:BGKZ}) it is enough to prove that the involution preserves all $G \times M$ orbits. This we deduce from the following geometric statement.
\begin{introprop}
 \label{intthm:Geo}
Let $X:=X_{n,k}:=\{A,B\in Mat_{n+k}| AB=BA=0,rank(A)=n,rank(B)=k\}$.  Let $G$ act on $X_{n,k}$ by conjugations.
Define the transposition map $\theta:=\theta_{n,k}:X_{n,k} \to X_{n,k}$ by $\theta(A,B):=(A^t,B^t)$.

Then any $G$-orbit in  $X_{n,k}$ is $\theta$-invariant.
\end{introprop}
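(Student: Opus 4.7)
The plan is to reinterpret the problem in terms of modules over the commutative ring $R := F[X,Y]/(XY)$. Sending a pair $(A,B)\in X_{n,k}$ to the $R$-module $V_{A,B} := F^{n+k}$ with $X$ acting as $A$ and $Y$ as $B$ identifies the $G$-orbits in $X_{n,k}$ with the isomorphism classes of finite-dimensional $R$-modules $M$ satisfying $\dim M=n+k$, $\dim\ker X=k$, and $\dim\ker Y=n$. The transposed pair $(A^t,B^t)$ gives the $R$-module canonically isomorphic to the $F$-linear dual $V_{A,B}^{*}$ (with $X,Y$ acting by the transposes), so the proposition reduces to showing $M\cong M^{*}$ for every such $M$.

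I would next use the rank identities together with $AB=BA=0$ to deduce $\operatorname{Im}(A)=\ker(B)$ and $\operatorname{Im}(B)=\ker(A)$; this condition is preserved by direct summands. Since $\Spec R$ is nodal (two lines meeting at the origin) and distinct maximal ideals are coprime, any finite-dimensional $R$-module decomposes uniquely as $M = M_{\mathrm{o}}\oplus M_X \oplus M_Y$, with $M_{\mathrm o}$ supported at the origin (both $X,Y$ nilpotent), $M_X$ at nonzero $X$-axis points (so $Y|_{M_X}=0$ and $X|_{M_X}$ invertible), and $M_Y$ symmetrically. On $M_X$ and $M_Y$ the module structure is a single endomorphism up to conjugation, indecomposables are Jordan blocks, and self-duality is the classical fact that every matrix is conjugate to its transpose.

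For $M_{\mathrm o}$ the indecomposable $R$-modules are classified (as modules over the nodal singularity $F[[X,Y]]/(XY)$, equivalently over the special biserial algebra with two loops and relations $XY=YX=0$) into string modules and band modules. A direct inspection of the string combinatorics shows that every string module has $\operatorname{Im}(X)\subsetneq\ker(Y)$, so the rank condition forces $M_{\mathrm o}$ to be a direct sum of band modules $B(\lambda,m)$ with $\lambda\in F^{\times}$ and $m\geq 1$. Each such band module is encoded by a pair of linear maps $U\to W$ between $m$-dimensional spaces which, in suitable bases, is $(I_m,J_\lambda)$ with $J_\lambda$ a Jordan block of size $m$. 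Its dual module corresponds to the pair $(I_m,J_\lambda^{t})$ between the dual spaces, and since $J_\lambda^{t}$ is $GL_m$-conjugate to $J_\lambda$, these two pairs are equivalent under independent basis changes on source and target. Hence $B(\lambda,m)^{*}\cong B(\lambda,m)$.

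The main obstacle is the classification of origin-supported indecomposables and the verification that the rank condition excludes string modules. A more self-contained alternative avoiding this machinery is to decompose $V = W \oplus W_1' \oplus W_2' \oplus Z$ with $W := \ker A \cap \ker B$, $V_1 = W \oplus W_1'$ and $V_2 = W \oplus W_2'$, then reduce the normalization of $(A,B)$ to a Kronecker--Weierstrass normal form for the pencil of induced maps between $Z$ and $W$, together with the conjugacy classification of the restrictions $A|_{W_1'}$ and $B|_{W_2'}$; the required self-duality then follows from the self-duality of Kronecker pencils, itself a consequence of the single-matrix case.
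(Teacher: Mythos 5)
Your reduction to self-duality of finite-dimensional modules over $F[X,Y]/(XY)$ satisfying $\Im A=\Ker B$, $\Im B=\Ker A$ is a legitimate reformulation, and the exclusion of string modules is correct (for a string module of length $l$ one has $\rk X+\rk Y=l<l+1=\dim$, whereas the rank hypothesis is exactly $\rk X+\rk Y=\dim$). The gap is in your classification of the origin-supported part: the band modules for this algebra are indexed by \emph{all} cyclic words $X^{a_1}Y^{-b_1}\cdots X^{a_r}Y^{-b_r}$ (together with $m$ and $\lambda$), not only by the length-two word $XY^{-1}$, and \emph{every} band module satisfies $\rk X+\rk Y=\dim$, hence survives your rank condition. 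Your $B(\lambda,m)$, encoded by the pair $(I_m,J_\lambda):U\to W$, has $X^2=Y^2=0$; already in $X_{2,2}$ the pair with $A,B$ both nilpotent of Jordan type $(3,1)$ on $F^4$ (the band module of $X^2Y^{-2}$, i.e.\ $F[X,Y]/(XY,X^3,Y^2-\lambda X^2)$) is a point your list misses. So the argument is incomplete even in the cases $k\le 2$ that the proposition actually covers.

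More seriously, duality sends the band module of a cyclic word to the band module of the \emph{reversed} cyclic word, and a cyclic word such as $XY^{-1}X^2Y^{-1}X^3Y^{-1}$ is not cyclically equivalent to its reversal; the corresponding module is not self-dual. This is not a repairable detail: the proposition is genuinely false for $n,k\ge 3$ (as follows from \S\ref{sec:HiRank} combined with the reductions of \S\ref{sec:ImpRes}--\S\ref{sec:RedGeo}, which hold for all $k$), so any proof that never invokes $k\le 2$ cannot be correct, and yours invokes it nowhere. The hypothesis enters precisely here: $\rk(Y|_{M_{\mathrm o}})=m\sum b_i\le k\le 2$ leaves only reversal-symmetric bands, for which self-duality does hold. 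For comparison, the paper first reduces by Fitting decomposition and induction to nilpotent $A$, then describes the image of the centralizer $G_A$ in $\GL(\Ker A)\times\GL(\Coker A)$ via the Deligne filtration (Lemma \ref{lem:nuAPA}), and isolates the use of $k\le 2$ in the Key Lemma \ref{lem:Key} about orbits of the group $\cP$ on $\Hom(V,W)$ with $\dim V=\dim W=k$ --- explicitly noted to fail for $k\ge 3$. Your concluding ``Kronecker--Weierstrass'' alternative is too vague to assess ($(A,B)$ is a pair of endomorphisms with $AB=BA=0$, not a pencil of maps between two spaces under independent base change), and it too makes no use of $k\le 2$, so it cannot close the gap as stated.
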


We deduce this geometric statement from the key lemma \ref{lem:Key}, which states that every $M$-orbit in $U^t \setminus \GL_k(F)/U$ is transposition invariant, where $M<GL_k(F)$ is a Levi subgroup and $U$ is the corresponding unipotent subgroup. This lemma is a straightforward computation since $k\leq 2$, but for bigger $k$ it is not true.

\subsection{Related problems}
\subsubsection{Case $k=1$} In case when $k=1$ and $F$ is a local field, a stronger theorem holds. Namely, the functor of restriction from $\GL_{n+1}(F)$ to $\GL_n(F)$ is multiplicity free. This is proven in \cite{AGRS} for $F$ of characteristic 0, in \cite{AAG} for $F$ of positive characteristic. It is also proven for Archimedean $F$ in \cite{AG_AMOT,SZ}.

 This stronger statement does not hold for finite fields already for $n=1$.
Theorem \ref{thm:GL} may be viewed as a weaker form of this statement that works uniformly for local and  finite fields. 

Note that in case when $k=1$ and $F$ is a finite field, there is an alternative proof of Theorem \ref{thm:GL} which is based on the classification of irreducible representations of $\GL_n(F)$, see \cite{Fad,Gre,Zel}.

\subsubsection{The Archimedean case} We believe that the analog of Theorem \ref{thm:PiRho} for Archimedean $F$ holds. For $k=1$ it holds as explained above. For $k=2$  we believe that the proof given in this paper can be adapted to the Archimedean case. However this will require additional analysis.

\subsubsection{Higher rank cases} One can ask whether an analog of Theorem \ref{thm:PiRho} holds when $M$ is an arbitrary Levi subgroup of $G$. If $F$ is a local field, we do not know the answer for this question.
If $F$ is a finite field, such  analog of Theorem \ref{thm:PiRho}  holds only in the cases at hand. This is related to the fact that the restriction of any irreducible representation of
the permutation group $S_{n_1+...+n_l} $ to $S_{n_1} \times ... \times S_{n_l}$ is multiplicity free if and only if $l \leq 2$ and $\min(n_1,n_2) \leq 2$. We discuss those questions in \S \ref{sec:HiRank}.




\subsection{Contents of the paper}$ $

In \S \ref{sec:Prel} we give the necessary preliminaries.
In \S\S \ref{subsec:GenNot} we introduce notation that we will use throughout the paper.
In \S\S \ref{subsec:ell} we give some preliminaries and  notation on $l$-spaces,
l -groups and their representations based on \cite{BZ}. In \S\S
\ref{subsec:MultFreeFun} we define multiplicity free functors and formulate two theorems that enable to reduce "multiplicity free" property of a strongly right exact functor between the categories of smooth representations of two $l$-groups to
"multiplicity free" property of a certain representation of the product of those groups. We prove those theorems in Appendix \ref{app:MultFree}. In \S\S \ref{subsec:GK} we formulate a version of Gelfand-Kazhdan criterion for "multiplicity free" property of representations of the form 
$\Sc(X)$. 
We prove this version in  Appendix \ref{app:GK}.
In  \S\S \ref{subsec:BGKZ} we recall a criterion for vanishing of equivariant distributions in  terms of stabilizers of points.
In \S\S \ref{subsec:DelFilt} we recall the Deligne (weight) filtration attached to a nilpotent operator on a vector space.

In \S \ref{sec:ImpRes} we prove equivalence of Theorems \ref{thm:PiRho}, \ref{thm:Sc} and \ref{thm:GP} and deduce Theorem \ref{thm:GL} from them.

In \S \ref{sec:RedGeo} we reduce Theorem \ref{thm:Sc} to the geometric statement.

In \S \ref{sec:PFGeoStat} we prove the geometric statement.

In \S \ref{sec:HiRank} we discuss whether an analog of Theorem \ref{thm:PiRho} holds when $M$ is an arbitrary Levi subgroup.
In \S\S \ref{subsec:Perm} we answer an analogous question for permutation groups. In \S\S \ref{subsec:Con} we discuss the connection between the questions for permutation groups and general linear groups over finite fields. In \S\S \ref{subsec:LocHighRank} we discuss the local field case.

In Appendix \ref{app:MultFree} we prove theorems on strongly right exact functors between the categories of smooth representations of two reductive groups from \S\S \ref{subsec:MultFreeFun}.

In Appendix \ref{app:GK} we prove a version of Gelfand-Kazhdan criterion for "multiplicity free" property of geometric representations from  \S\S \ref{subsec:GK}.

\subsection{Acknowledgments}$ $

We thank {\bf Joseph Bernstein} for initiating this work by telling us the case $k=1$.
We also thank {\bf Joseph Bernstein}, {\bf Evgeny Goryachko} and {\bf Erez Lapid} for useful discussions.

This work was conceived while the authors were visiting 
the Max Planck Institute
fur Mathematik (MPIM) in Bonn. We wish to thank the MPIM for its hospitality.
D.G. also worked on this paper when he was  a post-doc at  the Weizmann Institute of Science. He wishes to thank the Weizmann Institute for wonderful working conditions during this post-doc and during his graduate  studies. 

Both authors were partially supported by a BSF grant, a GIF grant, and an ISF Center
of excellency grant. A.A was also supported by ISF grant No. 583/09 and 
D.G. by NSF grant DMS-0635607. Any opinions, findings and conclusions or recommendations expressed in this material are those of the authors and do not necessarily reflect the views of the National Science Foundation.

\section{Preliminaries} \label{sec:Prel}

\subsection{General notation} \label{subsec:GenNot}

\begin{itemize}


\item For a group $H$ acting on a set $X$ and a point $x \in X$ we denote by $Hx$ or by $H(x)$ the orbit of $x$ and by $H_x$ the stabilizer of $x$.
We also denote by $X^H$ the set of fixed points in $X$.

\item For a representation $V$ of a group $H$ we denote by $V^H$ the space of invariants and by $V_H$ the space of coinvariants, i.e.
$V_H:=V/(\Span\{v - gv\, | \, g\in  H, \, v\in  V\})$.

\item For a Lie algebra $\g$ acting on a vector space $V$ we denote by $V^{\g}$ the space space of invariants. Similarly, for any element $X \in \g$ we denote by $V^X$ the kernel of the action of $X$.

\item For a linear operator $A:V \to W$ we denote the cokernel of A by $\Coker A:= W/ImA$.

\item For a linear operator $A:V \to V$ and an $A$-invariant subspace $U \subset V$ we denote by $A|_U:U \to U$  and $A|_{V/U}:V/U \to V/U$ the natural induced operators.
%
%
\end{itemize}

\subsection{$l$-spaces and $l$-groups} \label{subsec:ell} $ $

We will use the standard terminology of $l$-spaces introduced in \cite{BZ}.
Let us recall it.

\begin{itemize}
\item An $l$-space is a Hausdorff locally compact totally disconnected topological space.

\item For an $l$-space $X$ we
denote by $\Sc(X)$ the space of Schwartz functions on $X$, i.e. locally constant compactly supported) functions on $X$. We denote by $\Sc^*(X)$ the dual
space and call its elements distributions.
\item In \cite{BZ} there was introduced the notion of "l-sheaf". As it was later realized (see e.g. \cite[\S\S 1.3]{Ber_Lec}) this notion is equivalent to the usual notion of sheaf on an $l$-space, so we will use the results of \cite{BZ} for sheaves. 

\item For a sheaf $\cF$ on an $l$-space $X$ we denote by $\Sc(X,\cF)$ the space of compactly supported sections of $\cF$ and $\Sc^*(X,\cF)$ denote its dual space.
\item Note that $\Sc(X_1,\cF_1) \otimes \Sc(X_2,\cF_2) \cong \Sc(X_1 \times X_2,\cF_1 \boxtimes \cF_2)$  for any $l$-spaces $X_i$ and sheaves $\cF_i$ on them.

\item An $l$-group is a topological group which has a basis of topology at $1$ consisting of open compact
subgroups. In fact, any topological group which is an $l$-space is an $l$-group.

\item Let an l-group $G$ acts (continuously) on an l-space $X$. Let $a:G \times X \to X$ be the action map and $p:G \times X \to X$ be the projection. A $G$-equivariant sheaf on $X$ is a sheaf $\cF$ on $X$ together with an isomorphism $a^{*} \cF \to p^{*} \cF$ which satisfy the natural conditions.

\item For a representation $V$ of an $l$-group $H$ we denote by $V^{\infty}$ the space of smooth vectors, i.e. vectors whose stabilizers
are open.

\item We denote $\widetilde{V}:=(V^*)^{\infty}$.
\item For an $l$-group $H$ we denote by $\cH(H)$ the convolution algebra of smooth (i.e. locally constant w.r.t. the action of $H$)  compactly supported measures on $H$.

\item Similarly for a transitive $H$-space $X$ we denote by $\cH(X)$ the space of smooth   compactly supported measures on $X$.

\item For an $l$-group $H$ we denote by $\cM(H)$ the category of smooth  representations of $H$.

\item Recall that if an $l$-group $H$ acts (continuously) on an $l$-space $X$ and $\cF$ is an $H$-equivariant sheaf on $X$ then $\Sc(X,\cF)$
is a smooth representation of $H$.
\end{itemize}

\begin{defn}
A representation $V$ of an $l$-group $H$ is called {\bf admissible} if one of the following equivalent conditions
holds.

\begin{enumerate}
\item For any  open compact subgroup $K<H$ we have $\dim V^K < \infty$.
\item There exists an open compact subgroup $K<H$ such that $\dim V^K < \infty$.
\item For any  open compact subgroup $K<H$, $V|_K =  \bigoplus \limits _{\rho \in \Irr K} n_{\rho} \rho$, where $n_{\rho}$ are finite numbers and  $\Irr K$ denotes the collection of isomorphism classes of irreducible representations of $K$.
\item The natural morphism $V \to \widetilde{\widetilde{V}}$ is an isomorphism.
\end{enumerate}
\end{defn}

\begin{thm}[Harish-Chandra]
Let $H$ be a reductive (not necessarily connected) group defined over $F$.
Then every smooth irreducible representation of $H(F)$ is admissible.
\end{thm}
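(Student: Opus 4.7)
The plan is to dispose of the finite-field case trivially and handle the non-Archimedean case by the standard Jacquet--Casselman reduction to supercuspidals. When $F$ is finite the group $H(F)$ is finite, every irreducible smooth representation is automatically finite-dimensional, and condition (1) of the definition is immediate. So I focus on $F$ a non-Archimedean local field.

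First I would invoke Jacquet's theorem on the existence of a cuspidal support. Given an irreducible smooth $\pi$, let $P = MU$ be a parabolic subgroup of $H$ minimal with the property that the Jacquet module $\pi_U$ is non-zero (if $\pi$ is itself supercuspidal we take $P = H$). Minimality forces any irreducible quotient $\sigma$ of $\pi_U$ to be supercuspidal as a representation of $M(F)$, and Frobenius reciprocity lifts the surjection $\pi_U \twoheadrightarrow \sigma$ to a non-zero $H(F)$-equivariant map $\pi \to \Ind_P^H(\sigma)$, which by irreducibility of $\pi$ is an embedding. Since admissibility is inherited by subrepresentations, it suffices to prove (a) every irreducible supercuspidal representation of a Levi $M(F)$ is admissible, and (b) parabolic induction preserves admissibility.

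For (a), use the fundamental compactness property of supercuspidal matrix coefficients: for $\sigma$ irreducible supercuspidal with central character $\omega$, every function $g \mapsto \langle \tilde v, \sigma(g) v \rangle$ is compactly supported modulo the center $Z_M$ of $M(F)$. Fix an open compact $K \subset M(F)$ and a non-zero $\tilde v \in \widetilde\sigma^K$; the coefficient map sends $\sigma^K$ into the space of $(K,K)$-biinvariant, $\omega$-equivariant functions on $M(F)$ with support contained in a fixed compact subset of $Z_M \backslash M(F)$, which is manifestly finite-dimensional. Irreducibility of $\sigma$ makes this map injective for a suitable choice of $\tilde v$, giving $\dim \sigma^K < \infty$. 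For (b), fix a good maximal compact $K_0 \subset H(F)$ with Iwasawa decomposition $H(F) = P(F) K_0$. For any open compact $K \subset K_0$, the space $(\Ind_P^H \sigma)^K$ is governed by the finitely many $P(F)$-$K$ double cosets in $H(F)$, and on each such coset the contribution is a finite-dimensional space of the form $\sigma^{K'}$ for a compact open $K' \subset M(F)$ depending on the coset; finiteness of both inputs yields finite-dimensionality of $(\Ind_P^H \sigma)^K$, and this suffices by equivalence of conditions (1) and (2).

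The main obstacle is (a): it is the only genuinely non-formal step, requiring both the structural input that supercuspidal matrix coefficients are compactly supported modulo center and a finiteness argument for $\omega$-equivariant biinvariant functions with prescribed compact support on $Z_M \backslash M(F)$. Everything else --- Jacquet's theorem, Frobenius reciprocity, the Iwasawa decomposition, and the passage to subrepresentations --- is formal once (a) is in hand.
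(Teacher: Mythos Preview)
The paper does not prove this theorem at all: it is stated in \S\S\ref{subsec:ell} as a background result attributed to Harish--Chandra, with no argument and no reference beyond the attribution. So there is no ``paper's own proof'' to compare against; the authors simply quote the result and move on.

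Your outline is the standard Jacquet--Casselman route and is essentially correct. One point deserves more care. In step~(a) you assert that the matrix coefficients $g\mapsto\langle\tilde v,\sigma(g)v\rangle$, as $v$ ranges over $\sigma^{K}$ with $\tilde v$ fixed, all have support contained in a \emph{fixed} compact subset of $Z_{M}\backslash M(F)$. Compact support of each individual coefficient follows from supercuspidality, but the uniformity in $v$ is not automatic and is exactly what makes the target space finite-dimensional. The usual way to close this gap is either (i) the Cartan decomposition argument: for $a$ sufficiently far in any positive Weyl chamber (the threshold depending only on $K$), the pairing $\langle\tilde v,\sigma(a)v\rangle$ vanishes for all $K$-fixed $v,\tilde v$ because it factors through a Jacquet module, which is zero; or (ii) the Schur's lemma argument: produce $f\in\cH(M(F),\omega^{-1})$ with $\sigma(f)$ a non-zero scalar (e.g.\ $f=\overline{c_{v_{0},\tilde v_{0}}}$ when $\omega$ is unitary, or a suitable variant in general), so that $e_{K}\ast f\ast e_{K}$ has compact support mod center, is $(K,K)$-biinvariant, and acts as the identity on $\sigma^{K}$, forcing $\dim\sigma^{K}<\infty$. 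Either fills the gap; without one of them your step~(a) is incomplete. Step~(b) and the reduction via Frobenius reciprocity are fine as written.
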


\begin{defn}
Let $H$ be an $l$-group.
An $\cH(H)$-module $M$ is called {\bf unital} if $\cH(H)M=M$.
\end{defn}

\begin{thm}[Bernstein-Zelevinsky]\label{thm:RepUMod}
Let $H$ be an $l$-group. Then\\
(i) the natural functor between $\cM(H)$ and the category of unital $\cH(H)$-modules is an equivalence of
categories.\\
(ii) The category $\cM(H)$  is abelian.
\end{thm}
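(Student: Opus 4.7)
The plan is to exhibit mutually quasi-inverse functors between $\cM(H)$ and the category of unital $\cH(H)$-modules, and then derive (ii). In the easy direction, a smooth representation $V$ becomes an $\cH(H)$-module via $\mu \cdot v := \int_H g(v)\,d\mu(g)$, which is a finite sum because $v$ has an open compact stabilizer $K_v$ (by smoothness of $V$) and $\mu$ is locally constant of compact support. Writing $e_K$ for the normalized characteristic measure $\frac{1}{\vol(K)}\chi_K$ of an open compact subgroup $K$, one has $e_K \cdot v = v$ whenever $K \subseteq K_v$, so $\cH(H) V = V$ and the resulting module is unital; morphisms in $\cM(H)$ evidently pass to $\cH(H)$-linear maps.

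The substantive direction is the inverse. Given a unital $\cH(H)$-module $M$, I would first observe that $\{e_K\}$ is a directed family of idempotents: $e_K * e_K = e_K$, and $e_{K'} * e_K = e_{K'}$ for $K' \subseteq K$. Unitality, combined with the fact that every element of $\cH(H)$ is $e_K$-left-fixed for $K$ sufficiently small, implies that each $v \in M$ satisfies $e_K v = v$ for some open compact $K$. I would then define the $H$-action by $h \cdot v := \bigl(\tfrac{1}{\vol(K)}\chi_{hK}\bigr) v$, where the parenthesized function denotes the left translate of $e_K$ by $h$, viewed as an element of $\cH(H)$. The required verifications are independence from the choice of $K$ (handled by shrinking $K$), the representation identity $h_1 (h_2 v) = (h_1 h_2) v$ (a direct convolution computation using associativity of $*$), and smoothness, which is automatic since the same $K$ continues to stabilize $v$ under the constructed action. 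Direct inspection on each $e_K M$ shows that the two functors compose to the identity on objects, and on morphisms this is trivial.

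Part (ii) can be handled either by transporting abelianness of unital $\cH(H)$-modules through (i), or directly in $\cM(H)$: the vector-space kernel and cokernel of a morphism of smooth representations inherit $H$-actions which remain smooth (immediate for the kernel; for the cokernel one uses that a quotient of a smooth representation by an $H$-stable subspace is again smooth). The main technical obstacle is the inverse construction in (i) --- specifically, setting up the approximate identity of idempotents cleanly and verifying that the reconstructed $H$-action is well-defined, associative, and smooth. Once this bookkeeping is done, the equivalence of categories and the abelian structure both follow formally.
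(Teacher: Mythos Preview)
The paper does not prove this theorem at all: it is stated as a result of Bernstein--Zelevinsky, with the implicit reference to \cite{BZ}, and is used as a black box. So there is no ``paper's own proof'' to compare against.

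Your argument is the standard one and is essentially correct. One small slip: the identity $e_{K'} * e_K = e_{K'}$ for $K' \subseteq K$ is stated backwards. With the usual convolution convention ($\delta_g * \delta_h = \delta_{gh}$), left-$K'$-invariance of $e_K$ gives $e_{K'} * e_K = e_K$, and symmetrically $e_K * e_{K'} = e_K$; the larger idempotent absorbs the smaller. This does not damage the argument, since what you actually need is that $e_K v = v$ implies $e_{K'} v = v$ for smaller $K'$, and that follows from the correct identity via
\[
e_{K'} v = e_{K'}(e_K v) = (e_{K'} * e_K) v = e_K v = v.
\]
With this correction in place, the construction of the inverse functor, the well-definedness and associativity checks, and the smoothness verification all go through exactly as you describe, and part (ii) follows either by transport of structure or by the direct observation that kernels and cokernels of $H$-maps between smooth representations are again smooth.
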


\subsection{Multiplicity free functors} \label{subsec:MultFreeFun}

\begin{defn}
Let $H$ be an $l$-group. We call a representation $\pi  \in \cM(H)$ {\bf multiplicity free} if for any irreducible
admissible representation $\tau \in \cM(H)$ we have $\dim_{\C} \Hom(\pi, \tau) \leq 1$.

Let $H'$ be an $l$-group. We call a functor $\cF:\cM(H) \to \cM(H')$ a {\bf multiplicity free functor} if for any irreducible
admissible representation $\pi \in \cM(H)$, the representation $\cF(\pi)$ is multiplicity free.
\end{defn}

\begin{rem}
Note that if $H$ is not reductive then the
"multiplicity free" property might be rather weak since there might be too few admissible representations.
\end{rem}

\begin{thm}\label{thm:FunctorIsModule}
Let $H$ and $H'$ be $l$-groups.\\
Let $\cF:\cM(H) \to \cM(H')$ be a $\C$-linear functor that commutes with arbitrary direct limits (or, equivalently, is right exact and commutes  with arbitrary direct sums). Let $\Pi:=\cF( \cH(H))$.
Consider the action of $H$ on $\cH(H)$ given by
$g\mu:=\mu*\de_{g^{-1}}$. It defines an action of $H$ on $\Pi$ which commutes with the action of $H'$. In this way $\Pi$ becomes a representation of
$H \times H'$. 
Then\\
(i) $\Pi$ is a smooth representation. \\
(ii)  $\cF$ is canonically isomorphic to the functor given by $\pi \mapsto
(\Pi \otimes \pi)_H$.
\end{thm}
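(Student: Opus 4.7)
The plan is to exploit the hypothesis that $\cF$ preserves direct limits. Using that left and right translations on $\cH(H)$ commute, view $\cH(H)$ as an object of $\cM(H)$ via the left-translation action while regarding the right-translation action $h\mapsto R_h$, $R_h\mu:=\mu*\delta_{h^{-1}}$, as a family of endomorphisms in $\cM(H)$ (this is the action singled out in the theorem statement). Writing $\cH(H)=\varinjlim_K \cH(H)^{R_K}$ as the filtered union of its right-$K$-invariant measures over open compact $K\leq H$, each $\cH(H)^{R_K}$ is a subobject of $\cH(H)$ in $\cM(H)$, so applying $\cF$ gives $\Pi=\varinjlim_K \cF(\cH(H)^{R_K})$. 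Since $R_k=\id$ on $\cH(H)^{R_K}$ for every $k\in K$, functoriality forces $\cF(R_k)=\id$ on $\cF(\cH(H)^{R_K})$; hence every vector of $\Pi$ is fixed by some open compact $K$ under $h\mapsto \cF(R_h)$. This $H$-action is therefore smooth, and combined with the already-smooth $H'$-action (with which it commutes by construction), $\Pi$ becomes a smooth $H\times H'$-representation.

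\textbf{Part (ii): construction of the natural transformation.} Define $\cG:\cM(H)\to\cM(H')$ by $\cG(\pi):=(\Pi\otimes\pi)_H$, with $H$ acting diagonally (given action on $\pi$, constructed action on $\Pi$) and $H'$ through $\Pi$. Since tensor product over $\C$ and $H$-coinvariants are both right exact and commute with arbitrary direct sums, so does $\cG$. To build $\eta:\cG\to\cF$, observe that for each $\pi\in\cM(H)$ and each $v\in\pi$ the map $\tilde\phi_v:\cH(H)\to\pi$, $\mu\mapsto\mu\cdot v$ (using the convolution action of $\cH(H)$ on $\pi$), intertwines the left-translation action on $\cH(H)$ with the given $H$-action on $\pi$, hence is a morphism in $\cM(H)$. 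Applying $\cF$ yields $\phi_v:=\cF(\tilde\phi_v):\Pi\to\cF(\pi)$, which is $\C$-linear in $v$ by $\C$-linearity of $\cF$; thus $x\otimes v\mapsto \phi_v(x)$ defines a map $\Pi\otimes\pi\to\cF(\pi)$. The identity $\tilde\phi_{hv}\circ R_h=\tilde\phi_v$, which follows from the associativity of the $\cH(H)$-action on $\pi$ applied to the factorization $(\mu*\delta_{h^{-1}})\cdot(hv)=\mu\cdot v$, shows this map is $H$-invariant for the diagonal action; it is also manifestly $H'$-equivariant, so it descends to a natural transformation $\eta_\pi:\cG(\pi)\to\cF(\pi)$.

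\textbf{Part (ii): conclusion and principal obstacle.} It remains to show $\eta_\pi$ is an isomorphism for every $\pi$. For $\pi=\cH(H)$, the map $\eta_{\cH(H)}(x\otimes v)=\cF(R^*_v)(x)$, where $R^*_v(\mu):=\mu*v$ is right-convolution, is exactly the canonical isomorphism $(\Pi\otimes\cH(H))_H\cong\Pi\otimes_{\cH(H)}\cH(H)\cong\Pi$ arising from the unital right $\cH(H)$-module structure on $\Pi$ furnished by part (i). For general $\pi$, choose a presentation $\cH(H)^{(J)}\to\cH(H)^{(I)}\to\pi\to 0$ in $\cM(H)$, which is available because $\cH(H)$ is a generator of $\cM(H)\simeq\{\text{unital }\cH(H)\text{-modules}\}$. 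Applying $\cF$ and $\cG$ produces parallel right-exact sequences whose first two terms are identified via $\eta_{\cH(H)}$, so that $\cF(\pi)$ and $\cG(\pi)$ are cokernels of the same map $\Pi^{(J)}\to\Pi^{(I)}$; this forces $\eta_\pi$ to be an isomorphism. The principal technical obstacle is thus the verification of $\eta_{\cH(H)}$ itself: surjectivity is immediate from smoothness ($x=\eta_{\cH(H)}(x\otimes e_K)$ for any $K$ stabilizing $x$), but injectivity requires showing that the subspace of $\Pi\otimes\cH(H)$ cut out by the diagonal $H$-coinvariant relations coincides with the $\cH(H)$-balanced relation subspace, a routine but careful computation using approximate identities $e_K$.
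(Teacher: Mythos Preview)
Your proof is correct. Part (i) is essentially identical to the paper's argument: both write $\cH(H)$ as the filtered union of its right-$K$-invariant parts and use that $\cF$ commutes with direct limits.

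For part (ii) the paper takes a slightly different route. Rather than constructing the natural transformation $\eta$ explicitly and then invoking an arbitrary free presentation $\cH(H)^{(J)}\to\cH(H)^{(I)}\to\pi\to 0$, the paper writes down a single \emph{functorial} presentation
\[
\cH(H)_0\otimes\cH(H)\otimes\pi \longrightarrow \cH(H)\otimes\pi \longrightarrow \pi \longrightarrow 0,
\]
where $\cH(H)_0$ is the ideal of measures with zero integral. Applying $\cF$ (right exact, commutes with direct sums) turns this into
\[
\cH(H)_0\otimes\Pi\otimes\pi \longrightarrow \Pi\otimes\pi \longrightarrow \cF(\pi) \longrightarrow 0,
\]
and a short Yoneda-lemma argument identifies the cokernel of the first map with $(\Pi\otimes\pi)_H$ directly. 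So the paper never writes down $\eta$ or checks $\eta_{\cH(H)}$ separately; the ``principal technical obstacle'' you flag (that $H$-coinvariants agree with $\cH(H)$-balanced relations) is exactly the content of the paper's lemma $\pi_H=\Coker(\cH(H)_0\otimes\pi\to\pi)$, proved once in general via Yoneda. Your approach is the classical Eilenberg--Watts style argument and has the advantage of making the isomorphism explicit; the paper's is a bit terser and avoids choosing a presentation.
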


This theorem is known. 
For the sake of completeness we include its prove in Appendix \ref{subapp:PfFunMod}.

\begin{thm}\label{thm:Mult1FunctorIsMult1Module}
Let $H$ and $H'$ be $l$-groups.\\
Let $\cF:\cM(H) \to \cM(H')$ be a $\C$-linear functor that commutes with arbitrary direct limits. 
Then $\cF$ is a multiplicity free functor if and only
if $\cF(\cH(H))$ is a multiplicity free representation of $H \times H'$.  
\end{thm}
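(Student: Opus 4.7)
The plan is to reduce the comparison of ``multiplicity free functor'' and ``multiplicity free representation'' to a Frobenius-type adjunction, using the concrete description of $\cF$ furnished by Theorem \ref{thm:FunctorIsModule}. First, that theorem gives a smooth representation $\Pi := \cF(\cH(H))$ of $H \times H'$ together with a canonical isomorphism $\cF(\pi) \cong (\Pi \otimes \pi)_H$, where the $H$-action on $\Pi \otimes \pi$ is diagonal (through the first factor of $H \times H'$ on $\Pi$, and through the given action on $\pi$) and $H'$ acts on the coinvariants via its action on $\Pi$. So it suffices to translate both sides of the equivalence through this model.

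The key technical step is the identity
\[
\Hom_{H'}(\cF(\pi), \tau) \;\cong\; \Hom_{H \times H'}\bigl(\Pi,\; \widetilde{\pi} \boxtimes \tau\bigr),
\]
valid for any irreducible admissible $\pi \in \cM(H)$ and any $\tau \in \cM(H')$. I would derive it as follows: $H'$-equivariant maps out of $(\Pi \otimes \pi)_H$ are the same as $H'$-equivariant maps $\Pi \otimes \pi \to \tau$ that are $H$-invariant for the diagonal action (equivalently, $H \times H'$-equivariant if $\tau$ is given the trivial $H$-action); by tensor-Hom adjunction these correspond to $H \times H'$-equivariant maps $\Pi \to \Hom_{\C}(\pi, \tau)$. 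Since $\Pi$ is smooth, such maps factor through the smooth vectors in $\Hom_{\C}(\pi, \tau)$ for the $H \times H'$-action, and because $\pi$ is admissible these smooth vectors coincide with $\widetilde{\pi} \otimes \tau = \widetilde{\pi} \boxtimes \tau$.

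With the identity in hand, both directions are formal. For the forward direction, suppose $\Pi$ is multiplicity free; then for irreducible admissible $\pi, \tau$ the representation $\widetilde{\pi} \boxtimes \tau$ is again irreducible admissible of $H \times H'$, so the right-hand side has dimension at most one. For the converse, suppose $\cF$ is multiplicity free; given any irreducible admissible $\sigma \in \cM(H \times H')$, write $\sigma \cong \pi' \boxtimes \tau'$ with $\pi', \tau'$ irreducible admissible, and apply the identity with $\pi := \widetilde{\pi'}$ (using $\widetilde{\widetilde{\pi'}} \cong \pi'$ from admissibility) to get $\dim \Hom_{H \times H'}(\Pi, \sigma) = \dim \Hom_{H'}(\cF(\widetilde{\pi'}), \tau') \leq 1$.

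The main obstacle is the input used in the converse: every irreducible admissible representation of a product $H \times H'$ of $l$-groups is externally factorable as $\pi' \boxtimes \tau'$ with $\pi', \tau'$ irreducible admissible. In the reductive $p$-adic and finite-group settings this is standard (Schur's lemma plus an admissibility/density argument against $\cH(H) \otimes \cH(H')$), and I would justify it in that generality; without it the converse implication fails to have meaning. A secondary subtlety, worth writing out carefully in the appendix, is the identification $\bigl(\Hom_{\C}(\pi, \tau)\bigr)^{\infty} = \widetilde{\pi} \otimes \tau$ for the $H \times H'$-action, which rests precisely on smoothness of $\tau$ over $H'$ and admissibility of $\pi$ over $H$.
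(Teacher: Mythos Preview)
Your argument is correct and follows essentially the same route as the paper: both derive the key identity $\Hom_{H'}(\cF(\pi),\tau)\cong\Hom_{H\times H'}(\Pi,\widetilde{\pi}\boxtimes\tau)$ via Theorem~\ref{thm:FunctorIsModule}, tensor--Hom adjunction, and the identification $\Hom_{\C}(\pi,\tau)^{\infty}=\widetilde{\pi}\otimes\tau$ (the paper proves this last step through its lemma on isotypic components of compact $l$-groups). You are somewhat more explicit than the paper about the converse direction---the paper simply records ``Corollary: Theorem~\ref{thm:Mult1FunctorIsMult1Module} holds'' after the key identity, whereas you spell out the use of the external tensor factorization of irreducible admissible representations of $H\times H'$---but this is the same argument made more carefully, not a different one.
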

For proof see Appendix \ref{subapp:PfFreeFunMod}.

\subsection{Gelfand Kazhdan criterion for "multiplicity free" property of geometric representations}  \label{subsec:GK}
\begin{thm}\label{thm:GK}
Let $H$ be an $l$-group.
Let $X$ and $Y$ be $H$-spaces and $\cF$ and $\cG$ be $H$-equivariant sheaves on $X$ and $Y$ respectively.
Let $\tau:X \to Y$ be a homeomorphism (not necessarily $H$-invariant).
Suppose that we are given an isomorphism $\tau_*\cF \simeq \cG$.
Define $T:X \times Y \to X \times Y$ by $T(x,y):=(\tau^{-1}(y),\tau(x))$.
It gives an involution $T$ on the space $\Sc^*(X \times Y,\cF \boxtimes \cG)$.

Suppose that any $\xi \in \Sc^*(X \times Y,\cF \boxtimes \cG)$ which is invariant
with respect to the diagonal action of $H$ is invariant with respect to $T$.
Then for any irreducible admissible representation $\pi \in \cM(H)$ we have
$$ \dim \Hom (\Sc(X,\cF), \pi) \cdot \dim \Hom (\Sc(Y,\cG)), \widetilde{\pi}) \leq 1.$$ 
\end{thm}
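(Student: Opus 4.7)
The plan is to argue by contradiction. I will assume
$$\dim \Hom_H(\Sc(X,\cF), \pi) \cdot \dim \Hom_H(\Sc(Y,\cG), \widetilde{\pi}) \geq 2,$$
and exploit the symmetry of the hypothesis under swapping $(X,\cF,\pi) \leftrightarrow (Y, \cG, \widetilde{\pi})$ and $\tau \leftrightarrow \tau^{-1}$ to reduce to the case where there exist linearly independent $l_1, l_1' \in \Hom_H(\Sc(X,\cF),\pi)$ together with a nonzero $l_2 \in \Hom_H(\Sc(Y,\cG),\widetilde{\pi})$. The goal will be to derive that $L := l_2 \circ \tau_* : \Sc(X,\cF) \to \widetilde{\pi}$ vanishes identically, which will contradict the fact that $L$ is surjective ($\tau_*$ is an isomorphism coming from the sheaf identification $\tau_*\cF \cong \cG$, and $l_2$ is surjective because it is a nonzero map to the irreducible $\widetilde{\pi}$).

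My central construction will be the distribution $\xi \in \Sc^*(X \times Y, \cF \boxtimes \cG)^H$ defined by $\xi(f \otimes g) := \langle l_1(f), l_2(g)\rangle$, where the bracket is the canonical $H$-invariant pairing $\pi \otimes \widetilde{\pi} \to \C$. This distribution is manifestly $H$-invariant, so the hypothesis guarantees it is also $T$-invariant. Unwinding $T^*(f \otimes g)$ using $T(x,y) = (\tau^{-1}(y), \tau(x))$ together with the identification $\tau_*\cF \cong \cG$ (so that $g \circ \tau$ lives in $\Sc(X,\cF)$ and $f \circ \tau^{-1}$ lives in $\Sc(Y,\cG)$), I will rewrite the $T$-invariance of $\xi$ as the symmetry identity
$$\langle l_1(f), L(f')\rangle = \langle l_1(f'), L(f)\rangle \quad \text{for all } f, f' \in \Sc(X,\cF),$$
and the same identity will hold with $l_1'$ in place of $l_1$.

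From this symmetry I plan to deduce first that $\ker l_1 \subseteq \ker L$: if $l_1(f) = 0$, the identity yields $\langle l_1(f'), L(f)\rangle = 0$ for every $f'$; surjectivity of $l_1$ (from irreducibility of $\pi$) together with non-degeneracy of the pairing then forces $L(f) = 0$. Likewise $\ker l_1' \subseteq \ker L$, reducing the contradiction to the claim $\Sc(X,\cF) = \ker l_1 + \ker l_1'$. To establish that claim I will look at the $H$-equivariant map $\Psi := (l_1, l_1') : \Sc(X,\cF) \to \pi \oplus \pi$. The image of $\Psi$ is an $H$-subrepresentation of $\pi \oplus \pi$; using Schur's lemma $\End_H(\pi) = \C$ (which holds by admissibility and irreducibility), the proper nonzero $H$-subrepresentations of $\pi \oplus \pi$ classify as $0 \oplus \pi$ and the graphs $\{(v, \lambda v) : v \in \pi\}$ for $\lambda \in \C$, each of which would force $l_1$ or $l_1'$ to vanish or to be a scalar multiple of the other, contradicting linear independence. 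Hence $\Psi$ is surjective, and given any $f$ I can choose $f_2$ with $\Psi(f_2) = (l_1(f), 0)$, so that $f - f_2 \in \ker l_1$ and $f_2 \in \ker l_1'$, giving the desired decomposition.

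The main obstacle I anticipate is the sheaf-theoretic bookkeeping in the second step—carefully identifying the $T^*$-action on pure tensors via the isomorphism $\tau_*\cF \cong \cG$ so that $T$-invariance becomes a clean symmetry of a scalar bilinear form on $\Sc(X,\cF)$. Once that translation is cleanly in hand, the rest of the argument is a short Schur-type calculation; the only substantive uses of admissibility are $\End_H(\pi) = \C$ and the non-degeneracy of the canonical pairing $\pi \otimes \widetilde{\pi} \to \C$.
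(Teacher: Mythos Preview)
Your argument is correct and uses the same core idea as the paper: form $H$-invariant distributions $\xi(f\otimes g)=\langle l(f),l'(g)\rangle$ from pairs of intertwiners, invoke the $T$-invariance hypothesis, and read off a kernel relation that, combined with Schur's lemma, forces proportionality. The paper's packaging is slightly more economical: it fixes one nonzero $\phi\in\Hom(\Sc(X,\cF),\pi)$ and two $\psi_1,\psi_2\in\Hom(\Sc(Y,\cG),\widetilde\pi)$, observes that the left kernels of both $\xi_i$ equal $\Ker\phi$, and then uses $T$-invariance to conclude that the right kernels coincide, i.e.\ $\Ker\psi_1=\Ker\psi_2$, whence $\psi_2=c\psi_1$ by Schur; this avoids both your symmetry reduction and the classification of subrepresentations of $\pi\oplus\pi$.
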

In the case when $X$ and $Y$ are transitive and correspond to each other in a certain way, this theorem is a classical theorem by Gelfand and Kazhdan (see \cite{GK}).
For the general case the proof is the same and we repeat it in Appendix \ref{app:GK}.
In fact, in this paper we could use the classical formulation of this theorem, but we believe that this theorem is useful in the general formulation.
%
\begin{defn}
Let $H$ be an $l$-group.
Let $\theta:H \to H$ be an involution. 
Let $X$ be an $H$-space. \\
(i) Denote by $\theta(X)$ the $H$-space which coincides with $X$ as an
$l$-space but with the action of $H$ twisted by $\theta$. \\
(ii) Similarly, for a representation $\pi$ of $H$ we denote by
$\theta(\pi)$ the
representation $\pi \circ \theta$.\\
(iii) Let $\cF$ be an $H$-equivariant sheaf on $X$. Let us define an equivariant sheaf $\theta(\cF)$ on $\theta(X)$. As a sheaf, $\theta(\cF)$ coincides with $\cF$ and the equivariant structure is defined in the following way. Let $a:H \times X \to X$ denote the action map and $p_2:H \times X \to X$ denote the projection.
Let $\alpha: a^*(\cF) \to p_2^*(\cF)$ denote the equivariant structure of $\cF$.
We have to define an equivariant structure $\theta(\alpha): (\theta(a))^*(\theta(\cF)) \to p_2^*(\theta(\cF))$, where $\theta(a):H \times
\theta(X) \to \theta(X)$ is the action map.
Note that $(\theta(a))^*(\theta(\cF)) \cong (\theta \times Id)^*(a^*(\cF))$. Since $\theta \times Id$ is an  involution, 
it is enough to define a map between $a^*(\cF)$ and $(\theta \times Id)^*(p_2^*(\cF))$. Let $\beta$ denote the canonical isomorphism
between $(\theta \times Id)^*(p_2^*(\cF))$ and $(p_2 \circ
(\theta\times Id))^*(\cF) = p_2^*(\cF)$. Now, the desired map is given by $\beta^{-1} \circ \alpha$.
\end{defn}

\begin{remark}
Clearly, $\Sc(\theta(X),\theta(\cF)) \cong \theta(\Sc(X,\cF)).$
\end{remark}

\begin{notn}
Let $H:=\GL_{n_1} \times ... \times \GL_{n_k}$. We denote by $\kappa$ the Cartan involution $\kappa(g)
:=(g^t)^{-1}$.
\end{notn}

\begin{thm}[\cite{GK}]\label{thm:DualKappa}
Let $H:=\GL_{n_1} \times ... \times \GL_{n_k}$. Let $\pi$ be an irreducible smooth representation of $H(F)$. Then 
$\widetilde{\pi} \simeq \kappa(\pi)$.
\end{thm}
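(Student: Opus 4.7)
The plan is to follow the classical Gelfand--Kazhdan strategy of reducing the desired isomorphism to an identity of Harish-Chandra characters.

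First I would reduce to the case $k=1$, i.e.\ $H=\GL_n(F)$. Both the smooth contragredient and the twist by $\kappa$ distribute over external tensor products: for an irreducible $\pi=\pi_1\boxtimes\cdots\boxtimes\pi_k$ one has $\widetilde{\pi}\simeq\widetilde{\pi_1}\boxtimes\cdots\boxtimes\widetilde{\pi_k}$ and $\kappa(\pi)\simeq\kappa(\pi_1)\boxtimes\cdots\boxtimes\kappa(\pi_k)$, so the statement for each factor implies the statement for the product.

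The heart of the argument is the distributional claim: every $\GL_n(F)$-conjugation-invariant distribution on $\GL_n(F)$ is invariant under the transposition $g\mapsto g^t$. For $F$ finite this amounts to the elementary fact that every conjugacy class in $\GL_n(F)$ is transposition-stable. For $F$ non-Archimedean local I would apply Theorem \ref{thm:BGKZ}: transposition preserves each conjugation orbit (every matrix is conjugate to its transpose, as is visible from the rational canonical form) and commutes with the adjoint action of each stabilizer, so invariance can be checked orbit by orbit; the local analysis at non-regular orbits is the standard verification carried out in \cite{GK}.

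Once the distributional statement is in hand I would conclude via characters. Each irreducible admissible representation $\pi$ of $\GL_n(F)$ has a Harish-Chandra character $\chi_\pi$, which is a conjugation-invariant distribution, so by the previous step $\chi_\pi(g^t)=\chi_\pi(g)$. A direct computation of the traces gives
$$\chi_{\widetilde{\pi}}(g)=\chi_\pi(g^{-1}) \qquad \text{and} \qquad \chi_{\kappa(\pi)}(g)=\chi_\pi((g^t)^{-1}),$$
and these agree by the previous identity; since irreducible admissible representations (in the $p$-adic setting, by Harish-Chandra's regularity theorem, and tautologically in the finite setting) are determined by their characters, we conclude $\widetilde{\pi}\simeq\kappa(\pi)$. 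The main obstacle is the non-Archimedean local case of the distributional claim, whose subtlety is entirely concentrated at the non-regular conjugacy classes where the orbit-by-orbit reduction provided by Theorem \ref{thm:BGKZ} must be supplemented by a local study of nilpotent-type contributions.
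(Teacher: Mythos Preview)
The paper does not prove this theorem; it is quoted from \cite{GK}. Your outline is exactly the classical Gelfand--Kazhdan argument from that reference: reduce to a single factor, establish that every conjugation-invariant distribution on $\GL_n(F)$ is transpose-invariant, and then conclude via equality of Harish-Chandra characters together with linear independence of characters of irreducible admissible representations. So there is nothing to compare against; your route is the one the paper is citing.

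One remark on your caveat about non-regular orbits: no supplementary local analysis is needed in this particular case. The conjugacy classes in $\GL_n(F)$ are locally closed, each is transpose-stable, and both $\GL_n(F)$ and every centralizer are unimodular (the centralizer of $g$ is the unit group of the commutant algebra $C\subset M_n(F)$, on which the trace pairing $(A,B)\mapsto \tr(AB)$ makes left and right multiplication by $a\in C$ mutually adjoint, so their determinants agree). Hence the orbit-by-orbit criterion behind Theorem~\ref{thm:BGKZ} (equivalently Corollary~\ref{cor:BGKZ}, once one adds the orbit-preservation hypothesis) applies directly and already gives the distributional statement. The delicate ``nilpotent-type'' analysis you allude to is what is required for pairs such as $(\GL_{n+1},\GL_n)$ in \cite{AGRS}, not for the adjoint action of $\GL_n$ on itself.
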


\begin{cor}\label{cor:MultFree}
Let $H:=\GL_{n_1} \times ... \times \GL_{n_k}$. Let $X$ be an $H(F)$-space. Let $\cF$ be an $H(F)$-equivariant sheaf on $X$.
Suppose that any $\xi \in \Sc(X \times \kappa(X), \cF \boxtimes \kappa(\cF))$ which is invariant with respect to the diagonal action of $H(F)$ is invariant with respect to swap of the coordinates. Then the representation $\Sc(X, \cF )$ is multiplicity  free.
\end{cor}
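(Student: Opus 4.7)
The plan is to deduce Corollary \ref{cor:MultFree} directly from Theorem \ref{thm:GK} by making the right choice of $(Y,\cG,\tau)$, and then to identify the two $\Hom$-factors in the Gelfand--Kazhdan bound using Theorem \ref{thm:DualKappa}.

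First I would take $Y := \kappa(X)$ as an $H(F)$-space, $\cG := \kappa(\cF)$ as an $H(F)$-equivariant sheaf on $Y$, and $\tau := \id : X \to Y$ as the underlying homeomorphism of $l$-spaces (valid since $X$ and $\kappa(X)$ coincide as $l$-spaces; the equivariant structures only differ by twisting the action by $\kappa$). The isomorphism $\tau_*\cF \simeq \cG$ is then tautological. With this choice, the involution $T(x,y)=(\tau^{-1}(y),\tau(x))$ becomes the swap $(x,y)\mapsto (y,x)$ on $X\times\kappa(X)$, and the action of $H(F)$ referenced in Theorem \ref{thm:GK} is exactly the diagonal action on $X\times\kappa(X)$. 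Thus the hypothesis of the corollary is precisely the hypothesis of Theorem \ref{thm:GK} in this setting.

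Applying Theorem \ref{thm:GK}, I obtain, for every irreducible admissible $\pi\in\cM(H(F))$,
$$\dim\Hom\bigl(\Sc(X,\cF),\pi\bigr)\cdot\dim\Hom\bigl(\Sc(\kappa(X),\kappa(\cF)),\widetilde{\pi}\bigr)\leq 1.$$
The next step is to identify the second factor with the first. By the remark immediately preceding the notation for $\kappa$, we have $\Sc(\kappa(X),\kappa(\cF))\cong \kappa(\Sc(X,\cF))$. By Theorem \ref{thm:DualKappa}, $\widetilde{\pi}\simeq\kappa(\pi)$. Since twisting a morphism of $H(F)$-representations simultaneously by $\kappa$ on source and target is an isomorphism on $\Hom$-spaces, I get
$$\Hom\bigl(\Sc(\kappa(X),\kappa(\cF)),\widetilde{\pi}\bigr)\simeq\Hom\bigl(\kappa(\Sc(X,\cF)),\kappa(\pi)\bigr)\simeq\Hom\bigl(\Sc(X,\cF),\pi\bigr).$$

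Substituting back, the product becomes the square of $\dim\Hom(\Sc(X,\cF),\pi)$, which is bounded by $1$, so the dimension itself is at most $1$ for every irreducible admissible $\pi$. That is exactly the statement that $\Sc(X,\cF)$ is multiplicity free. There is no real obstacle here; the only subtle point is being careful that the $\kappa$-twist on sheaves is compatible with $\kappa$-twist on the associated representation of compactly supported sections, and that Theorem \ref{thm:DualKappa} applies to products $\GL_{n_1}\times\cdots\times\GL_{n_k}$, both of which are already granted in the excerpt.
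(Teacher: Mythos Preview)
Your proof is correct and is precisely the intended argument: the corollary is meant to follow immediately from Theorem \ref{thm:GK} with $Y=\kappa(X)$, $\cG=\kappa(\cF)$, $\tau=\id$, together with Theorem \ref{thm:DualKappa} and the identification $\Sc(\kappa(X),\kappa(\cF))\cong\kappa(\Sc(X,\cF))$. (Note, incidentally, that the hypothesis in the corollary should read $\Sc^*$ rather than $\Sc$, matching Theorem \ref{thm:GK}; your application uses the correct version.)
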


\subsection{Bernstein-Gelfand-Kazhdan-Zelevinski criterion for vanishing of invariant distributions} \label{subsec:BGKZ}

\begin{thm}[Bernstein-Gelfand-Kazhdan-Zelevinsky] \label{thm:BGKZ}
Let an algebraic group $H$ act on an algebraic variety $X$, both defined over $F$. Let $H'$ be an open subgroup of $H(F)$.
Let $\cF$ be a sheaf over $X(F)$. Suppose that for any $x\in X(F)$
we have $$(\cF_x \otimes \Delta_{H'}|_{H'_x} \otimes  \Delta_{H'_x} ^{-1})^{H'_x}=0.$$
Then $\Sc(X, \cF )^{H'}=0.$
\end{thm}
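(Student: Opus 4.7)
\emph{Overall strategy.} I would reduce to the case where $X(F)$ is a single $H'$-orbit by a stratification and Noetherian-induction argument, and then apply Frobenius reciprocity; the vanishing hypothesis is precisely what appears on a single orbit.

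\emph{Stratification.} The basic tool is the localization short exact sequence: for any $H'$-invariant open subset $U \subseteq X(F)$ with closed complement $Z$, extension by zero yields
$$0 \to \Sc(U,\cF|_U) \to \Sc(X,\cF) \to \Sc(Z,\cF|_Z) \to 0$$
of $H'$-modules, and since $(-)^{H'}$ is left exact, vanishing of both outer terms forces vanishing of the middle. I would then proceed by Noetherian induction on closed $H$-invariant algebraic subvarieties of $X$: by upper semi-continuity of orbit codimension there is a dense open $H$-invariant algebraic subvariety $X^{\circ}\subseteq X$ consisting of $H$-orbits of a fixed dimension, and its complement is a proper closed $H$-invariant subvariety to which the inductive hypothesis applies. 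Decomposing $X^{\circ}(F)$ further by Galois cohomology into $H(F)$-orbits (each $H$-orbit contributing only finitely many, each locally closed) and then into $H'$-orbits (which are open inside each $H(F)$-orbit since $H'$ is open in $H(F)$), and iterating the localization sequence, the problem reduces to showing $\Sc(\cO,\cF|_{\cO})^{H'}=0$ for a single $H'$-orbit $\cO=H'\cdot x\cong H'/H'_x$.

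\emph{Single-orbit case and the main obstacle.} On such an orbit, the equivariant sheaf $\cF|_{\cO}$ is determined by its stalk $\cF_x$ as an $H'_x$-module, and a Frobenius reciprocity computation in the $l$-group setting, with the modular character twist $\Delta_{H'}|_{H'_x}\otimes\Delta_{H'_x}^{-1}$ arising from the identification $\cO\cong H'/H'_x$, gives an embedding
$$\Sc(\cO,\cF|_{\cO})^{H'}\hookrightarrow(\cF_x\otimes\Delta_{H'}|_{H'_x}\otimes\Delta_{H'_x}^{-1})^{H'_x},$$
which vanishes by hypothesis. The main technical subtlety lies not in this single-orbit identification but in the stratification step: handling the potentially countable (rather than finite) collection of $H'$-orbits inside a single $H(F)$-orbit and ensuring that the iterated use of the localization sequence is legitimate. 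This is dealt with by a Bernstein-style topological Noetherian-induction argument exploiting constructibility of algebraic orbits and the fact that the support of any putative nonzero $H'$-invariant section is itself $H'$-invariant and closed.
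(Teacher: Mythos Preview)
The paper does not supply its own proof of this theorem; it simply cites \cite[\S 6]{BZ} and \cite[\S\S 1.5]{Ber}. Your outline---stratify by algebraic orbit dimension, use the localization short exact sequence for $\Sc$ together with Noetherian induction on closed $H$-invariant subvarieties, pass from geometric orbits to $H(F)$-orbits by Galois cohomology and then to $H'$-orbits by openness, and on a single orbit invoke the Frobenius/Mackey identification with the modular twist---is precisely the argument in those references, so your proposal is correct and agrees with the intended proof.

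One small remark on where the genuine subtlety lies: the decomposition of an $H(F)$-orbit into $H'$-orbits is actually the \emph{easy} part, since those orbits are open and disjoint, so $\Sc$ splits as a direct sum and $H'$-invariants decompose accordingly. The more delicate point is that a stratum $X^\circ$ of fixed orbit dimension may contain infinitely many geometric $H$-orbits; this is handled (as in \cite{Ber}) by looking at the Zariski closure of the support of a putative nonzero invariant section and choosing a point whose orbit is open there, rather than by literally iterating the localization sequence over all orbits. Your closing paragraph gestures at this, so the sketch is fine, but you may want to rephrase where you locate the difficulty.
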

This theorem follows from \cite[\S 6]{BZ} and \cite[\S\S 1.5]{Ber}.

\begin{cor}\label{cor:BGKZ}

Let an algebraic group $H$ act on an algebraic variety $X$, both defined over $F$.
Let $\sigma:X \to X$ be  an involution defined over $F$. Suppose that $\sigma$ normalizes the action of $H$.
Then each $H(F)$-invariant distribution on $X$ is invariant under $\sigma$.
\end{cor}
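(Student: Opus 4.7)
My plan is to apply Theorem \ref{thm:BGKZ} to the $\sigma$-antisymmetric part of $\xi$. Using the hypothesis that $\sigma$ normalizes the $H$-action, fix an automorphism $\phi$ of $H$ with $\sigma(h\cdot x)=\phi(h)\cdot\sigma(x)$, form the semidirect product $H':=H(F)\rtimes\langle\sigma\rangle$ (acting on $X(F)$), and let $\chi:H'\to\{\pm1\}$ be the character trivial on $H(F)$ and sending $\sigma$ to $-1$. For any $H(F)$-invariant distribution $\xi$, the distribution $\eta:=\xi-\sigma^*\xi$ is still $H(F)$-invariant (because $\phi$ is an automorphism) and satisfies $\sigma^*\eta=-\eta$; equivalently, $\eta$ transforms under $H'$ via the character $\chi$. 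It therefore suffices to show that every $(H',\chi)$-equivariant distribution on $X(F)$ vanishes.

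To put this in the form of Theorem \ref{thm:BGKZ}, I encode the $\chi$-equivariance by a sheaf: let $\mathcal L_\chi$ be the $H'$-equivariant rank-one local system on $X(F)$ on which $H'$ acts through $\chi^{-1}$, so that $(H',\chi)$-equivariant distributions correspond to $H'$-invariant distributional sections of $\mathcal L_\chi$ twisted by the appropriate densities. Theorem \ref{thm:BGKZ} reduces the vanishing to a stalkwise check: at each $x\in X(F)$, the space $((\mathcal L_\chi)_x\otimes\Delta_{H'}|_{H'_x}\otimes\Delta_{H'_x}^{-1})^{H'_x}$ must be zero. Because $H'/H(F)$ is finite of order two, the two modular-character twists cancel on $H'_x$, so the criterion collapses to demanding that $\chi\big|_{H'_x}$ be nontrivial for every $x\in X(F)$.

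Since $H(F)\subset\ker\chi$, nontriviality of $\chi|_{H'_x}$ is equivalent to the existence of some $h\in H(F)$ with $h\sigma(x)=x$, i.e.\ to $\sigma(x)\in H(F)\cdot x$. This is precisely the condition that $\sigma$ preserves each $H(F)$-orbit on $X(F)$, which is the geometric content built into the hypothesis that $\sigma$ normalizes the $H$-action (compare the remarks preceding Proposition \ref{intthm:Geo}). I expect the only genuine obstacle to be the modular-character bookkeeping when passing between the $l$-groups $H(F)$ and $H'$; once orbit preservation and this formal check are in hand, the corollary follows immediately from Theorem \ref{thm:BGKZ}, since the anti-invariant component $\eta$ is forced to vanish, yielding $\sigma^*\xi=\xi$.
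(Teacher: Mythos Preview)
Your overall strategy---pass to $H':=H(F)\rtimes\langle\sigma\rangle$, let $\chi$ be the sign character, and use Theorem~\ref{thm:BGKZ} to kill the $\chi$-isotypic (anti-invariant) part of an $H(F)$-invariant distribution---is exactly the intended argument (the paper states the corollary without proof). Your reduction of the stalkwise criterion to the condition ``$\sigma(x)\in H(F)\cdot x$ for every $x$'' is also correct; note that since modular characters are positive-real-valued, the twist $\Delta_{H'_x}^{-1}$ can never cancel the $-1$ coming from $\chi$, so the bookkeeping you flag is harmless.

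The genuine gap is in your last paragraph. You assert that orbit preservation ``is the geometric content built into the hypothesis that $\sigma$ normalizes the $H$-action.'' It is not. Normalizing the action only guarantees that $\sigma$ carries $H(F)$-orbits to $H(F)$-orbits; it does \emph{not} force each orbit to map to itself. (Take $H$ trivial and $\sigma$ any nontrivial involution on $X$.) In fact the corollary, as printed, is missing exactly this hypothesis: one must also assume that every $H(F)$-orbit is $\sigma$-stable. The paper only invokes the corollary in \S\ref{sec:RedGeo}, and there the missing hypothesis is supplied separately as the ``geometric statement'' (Proposition~\ref{thm:Geo}); establishing it is the entire content of \S\ref{sec:PFGeoStat}. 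So your proof is correct once that hypothesis is added to the statement, but as a proof of the corollary \emph{as written} it fails at precisely the step you tried to wave through.

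One small technicality: Theorem~\ref{thm:BGKZ} is stated for an open subgroup of the $F$-points of an algebraic group. Since $\sigma$ is defined over $F$, form the algebraic semidirect product $\widetilde H:=H\rtimes\mathbb{Z}/2$; then $H'=\widetilde H(F)$ and the theorem applies verbatim.
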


\subsection{Deligne filtration} \label{subsec:DelFilt}


\begin{thm}[Deligne]
Let $A$ be a nilpotent operator on a vector space $V$. Then there exists and unique a finite decreasing filtration $V^{\geq i}$ s.t. 
\\
(i) $A$ is of degree 2 w.r.t. this filtration.\\
(ii) $A^l$ gives an isomorphism $V^{\geq l}/V^{\geq l+1} \simeq V^{\geq -l}/V^{\geq -l+1}$.
\end{thm}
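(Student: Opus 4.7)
The plan is to prove existence and uniqueness separately. For existence I exploit the Jordan form of the nilpotent operator: decompose $V=\bigoplus_j V_j$ into $A$-cyclic summands, each isomorphic to $k[A]/(A^{m_j})$ with cyclic generator $v_j$. On such a summand declare the basis vector $A^k v_j$ to have weight $2k-(m_j-1)$, and set $V_j^{\geq i}$ to be the span of the basis vectors of weight $\geq i$; then $V^{\geq i}:=\bigoplus_j V_j^{\geq i}$ is the desired filtration. Condition (i) is immediate since $A(A^k v_j)=A^{k+1}v_j$ has weight exactly two more, and condition (ii) holds because $A^l$ on each Jordan block shifts the basis index by $l$, bijectively identifying the weight $-l$ piece with the weight $l$ piece.

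For uniqueness I induct on $\dim V$. Given any $V^{\geq\bullet}$ satisfying (i) and (ii), let $N:=\max\{i:V^{\geq i}\neq 0\}$, which exists by finiteness of the filtration. Condition (ii) combined with $A^l=0$ for $l$ large forces $V^{\geq i}=0$ for $i>N$ and $V^{\geq i}=V$ for $i\leq-N$. Iterating (i) yields $A^N V^{\geq -N+1}\subseteq V^{\geq N+1}=0$, so $V^{\geq -N+1}\subseteq\ker A^N$; meanwhile (ii) at $l=N$ gives an isomorphism $V/V^{\geq -N+1}\xrightarrow{\sim}V^{\geq N}$ induced by $A^N$, which together with the vanishing above forces $V^{\geq N}=\operatorname{Im}A^N$. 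A rank--nullity count then upgrades the inclusion $V^{\geq -N+1}\subseteq\ker A^N$ into an equality, so the extreme pieces of the filtration are canonically determined by $A$ alone.

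To recover the intermediate pieces I reduce to a smaller situation via the quotient $V':=\ker A^N/\operatorname{Im}A^N$, well-defined since $A^{2N}=0$ when $N\geq 1$ (the case $N=0$ means $A=0$ and the filtration is forced to be $V^{\geq 0}=V$, $V^{\geq 1}=0$). The nilpotent operator $A$ descends to $A'$ on $V'$, and the pieces $V^{\geq i}$ for $-N+1\leq i\leq N$ descend to a filtration $V'^{\geq i}:=V^{\geq i}/V^{\geq N}$. This induced filtration satisfies (i) and (ii) for $A'$: the graded pieces at $|l|\leq N-1$ agree with those of $V^{\geq\bullet}$ (inheriting (ii) from $V$), while at $|l|\geq N$ both sides of (ii) on $V'$ vanish. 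Since $\dim V'=\dim V-2\dim\operatorname{Im}A^N<\dim V$, the inductive hypothesis applies and determines $V'^{\geq i}$; lifting back then pins down each $V^{\geq i}$ uniquely. The main obstacle is carefully verifying that (ii) descends cleanly to $V'$---in particular tracking what happens at the truncation degrees $|l|=N-1$, where the graded pieces of $V'$ must be identified with those of $V$ via the quotient map in a way compatible with the $A^l$-isomorphisms.
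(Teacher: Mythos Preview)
The paper does not prove this statement itself; it simply refers to \cite[Proposition I.6.I]{Del}. Your argument is correct and follows the standard pattern: existence via the Jordan-block grading, and uniqueness by showing that the extreme pieces are forced to be $V^{\geq N}=\operatorname{Im}A^N$ and $V^{\geq -N+1}=\ker A^N$, then inducting on the subquotient $\ker A^N/\operatorname{Im}A^N$. One small cleanup: the rank--nullity step (and the induction on $\dim V$) tacitly assumes $V$ is finite-dimensional, which is all the paper needs; but you can dispense with it entirely, since injectivity of the isomorphism in (ii) at $l=N$ already says that $A^Nv\in V^{\geq N+1}=0$ forces $v\in V^{\geq -N+1}$, giving $\ker A^N\subseteq V^{\geq -N+1}$ directly and hence equality without any dimension count. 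Also, at the truncation degree $l=N$ on $V'$ both graded pieces vanish (because $V'^{\geq N}=0$ and $V'^{\geq -N}=V'^{\geq -N+1}=V'$), so (ii) holds there trivially; you flagged this as the delicate spot but can record this one-line verification.
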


For proof see \cite[Proposition I.6.I]{Del} 

\begin{defn}
We will denote this filtration by $\cD_A^{\geq i}(V)$ and call it the Deligne filtration.
\end{defn}

\begin{notn}
The filtration $\cD_A^{\geq i}(V)$ induces filtrations on $\Ker A$ and $\Coker A$ in the following way
$$\cD_{A,+}^{\geq i}(\Ker A):= \cD_A^{\geq i}(V) \cap \Ker A \quad \text{ and }
\quad \cD_{A,-}^{\leq i}(\Coker A):= \cD_A^{\leq -i}(V)/ (\Im A \cap \cD_A^{\leq -i}(V)).$$
Denote by $\mu_A: \Gr^i_{A,-}(\Coker A) \to \Gr^i_{A,+}(\Ker A)$ the isomorphism given by $A^i$.
\end{notn}

\section{Implications between the main results} \label{sec:ImpRes}
\setcounter{lemma}{0}
In this section we prove that Theorems \ref{thm:PiRho},\ref{thm:Sc} and \ref{thm:GP} are equivalent and imply Theorem \ref{thm:GL}.

\begin{proof}[Proof that Theorem \ref{thm:PiRho} $\Leftrightarrow$ Theorem \ref{thm:Sc}]
Note that $J_M^G(\cH(G)) \cong \cH(U\backslash G)$ where  the action of $M$ is from the left and the action of $G$ is from the right. Clearly this representation of $G \times M$ is isomorphic  to the representation $\cH(G/U)$  that was described in Theorem \ref{thm:Sc}.
The equivalence follows now from Theorem \ref{thm:Mult1FunctorIsMult1Module}.
\end{proof}

\begin{proof}[Proof that Theorem \ref{thm:Sc} $\Leftrightarrow$ Theorem \ref{thm:GP}]
Note that $(G \times M)/P = G/U$. Hence $\cH(G/U) = \cH((G \times M)/P)$.
Now 
\begin{multline*}
Hom_{G \times M}(\cH(G/U) ,\pi)=Hom_{G \times M}(\cH((G \times M)/P) ,\pi)=Hom_{G \times M}(\tilde \pi,C^{\infty}((G \times M)/P))=\\=Hom_{G \times M}(\tilde \pi,Ind_P^{G \times M}(\C))= Hom_{P}(\tilde \pi,\C).
\end{multline*}
\end{proof}

\begin{proof}[Proof that Theorem \ref{thm:PiRho} implies Theorem \ref{thm:GL}]
Note that the center $Z(G)$ of $G$ lies in $M$, and that
$M \cong Z(G) \times H$. Now, let $\pi$ be an irreducible representation of $G$. Then $Z(G)$ acts on it by a character $\chi$. Let $\rho$ be an irreducible representation of $H$. Extend it to a representation of $M$ by letting $Z(G)$ act by $\chi$. Then
$ \Hom_H(J(\pi),\rho) = \Hom_M(J(\pi),\rho),$ which is at most one dimensional by Theorem \ref{thm:PiRho}.
\end{proof}

\section{Reduction to the geometric statement}  \label{sec:RedGeo}
\setcounter{lemma}{0}

\begin{defn}
Let $X:=X_{n,k}:=\{A,B\in Mat_{n+k}| AB=BA=0,rank(A)=n,rank(B)=k\}$.  Let $G$ act on $X_{n,k}$ by conjugations.
We define the transposition map $\theta:=\theta_{n,k}:X_{n,k} \to X_{n,k}$ by $\theta(A,B):=(A^t,B^t)$.
\end{defn}

In this section we deduce Theorem \ref{thm:Sc} from the following geometric statement.

\begin{prop}[geometric statement]\label{thm:Geo}
Any $G$-orbit in  $X_{n,k}$ is $\theta$-invariant.
\end{prop}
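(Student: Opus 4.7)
The plan is to first reduce to the case of nilpotent $A$, then translate the orbit question into one about $\GL_k(F)$, and finally invoke the key lemma \ref{lem:Key}.

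The first step uses the Fitting decomposition $V = V_{\mathrm{nil}} \oplus V_{\mathrm{inv}}$ induced by $A$, where $V_{\mathrm{nil}} = \bigcup_m \Ker A^m$ and $V_{\mathrm{inv}}$ is the $A$-stable complement on which $A$ is invertible. From $BA=0$ and the invertibility of $A|_{V_{\mathrm{inv}}}$ one obtains $B|_{V_{\mathrm{inv}}} = 0$, and from $AB=0$ one obtains $\operatorname{Im}(B) \subseteq \Ker A \subseteq V_{\mathrm{nil}}$. Hence both $A$ and $B$, as well as $\theta$, respect this splitting. On $V_{\mathrm{inv}}$ the pair becomes $(A|_{V_{\mathrm{inv}}},0)$ with $A|_{V_{\mathrm{inv}}}$ invertible, which is conjugate to its transpose by the classical fact that every matrix is conjugate to its transpose. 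From now on I may assume $A$ is nilpotent.

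For nilpotent $A$ of Jordan type $\lambda=(\lambda_1\geq\cdots\geq\lambda_k)$, the plan is to choose a Jordan basis $\{e_{i,j}\}$ with $A e_{i,j}=e_{i,j+1}$, so that $\Ker A = \Span(e_{i,\lambda_i})$ and $V/\operatorname{Im}(A) = \Span([e_{i,1}])$. The conditions $AB=BA=0$ together with the rank constraints force $B$ to factor through an isomorphism $\bar B : V/\operatorname{Im}(A) \xrightarrow{\sim} \Ker A$, which I encode as a matrix $M = M(B) \in \GL_k(F)$ in the chosen bases. A direct computation shows that conjugation by the block-reversal permutation (reversing the order of vectors within each Jordan block) sends $(A^t,B^t)$ to a pair of the form $(A,B')$ with $M(B') = M(B)^t$. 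Therefore the proposition reduces to showing that $M$ and $M^t$ lie in the same orbit for the residual action of $Z_G(A)$ on $\GL_k(F)$.

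The next step is to identify this action. Using the Deligne filtration of $A$, a short computation should show that $Z_G(A)$ preserves opposite filtrations on $\Ker A$ and on $V/\operatorname{Im}(A)$ (both with graded pieces of dimensions equal to the multiplicities $m_i$ of the distinct parts of $\lambda$), and that its image in $\GL(\Ker A)\times\GL(V/\operatorname{Im}(A)) \cong \GL_k(F)\times\GL_k(F)$ is the fibered product $P \times_L P^-$, where $P \subset \GL_k(F)$ is the standard parabolic with Levi $L = \prod_i \GL_{m_i}(F)$ and $P^-$ is its opposite. The induced action on $M$ is by left-right multiplication; writing $U$ and $U^t$ for the unipotent radicals of $P$ and $P^-$, the statement ``$M$ and $M^t$ are in the same orbit'' becomes the assertion that their images in $U^t \bs \GL_k(F)/U$ lie in the same $L$-orbit under conjugation. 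This is exactly the key lemma \ref{lem:Key}.

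For $k \leq 2$ the key lemma itself is a short Bruhat-cell computation. The case $k=1$ is vacuous. For $k=2$ the two Bruhat cells of $\GL_2$ are handled separately: in the big cell the normal form $M = u^+ s u^-$ depends only on $s = \diag(\det M/M_{22}, M_{22})$, which is manifestly invariant under $M \leftrightarrow M^t$; in the small cell $M = u^+ \sigma s u^-$ the diagonal $s$ is swapped by the Weyl element under $M \mapsto M^t$, and the residual $L$-conjugation absorbs this swap. The main obstacle of this approach will be the third paragraph above — the explicit identification of the image of $Z_G(A)$ with the parabolic fiber product $P \times_L P^-$ — since that is where the geometric content of the problem has to be matched with the group-theoretic input of the key lemma.
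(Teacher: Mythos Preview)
Your proposal is correct and follows essentially the same route as the paper. Both reduce to nilpotent $A$ via the Fitting decomposition, both factor $B$ through an isomorphism $\Coker A \to \Ker A$, both identify the image of $Z_G(A)$ in $\GL(\Ker A)\times\GL(\Coker A)$ as the fiber product $P\times_L P^-$ of opposite parabolics over their common Levi (this is the paper's Lemma \ref{lem:nuAPA}, and you rightly flag it as the main technical point), and both finish with the Key Lemma \ref{lem:Key}. The only cosmetic differences are that the paper packages the Fitting step as an induction on $n$ and replaces your explicit block-reversal conjugation by the choice of a bilinear form $Q$ with $A^t_Q=A$; your block-reversal permutation is precisely the matrix of such a $Q$, so the two devices are the same.
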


\begin{defn}$ $\\
(i) We denote by $E_{n,k}$ the $l$-space of exact sequences of the form
$$ 0 \to F^n \overset{\phi}{\to} F^{n+k} \overset{\psi}{\to} F^k \to 0.$$
We consider the natural action of $G\times M$ on $E_{n,k}$ given by $$(g,(h_1,h_2))(\phi,\psi):= (g\phi h_1^{-1},h_2\psi g^{-1}).$$
(ii) We denote by $\tau:E_{n,k} \to E_{k,n}$ the map given by $\tau(\phi,\psi):=(\psi^t,\phi^t).$ \\
(iii) We denote by $T:E_{n,k} \times E_{k,n}  \to E_{n,k} \times E_{k,n}$ the map given by $T(e_1,e_2):=(\tau(e_2),\tau(e_1)).$
\end{defn}

The following lemma is straightforward.

\begin{lem} \label{lem:Geo}$ $\\
(i) $G/U \cong E_{n,k}$ as a $G\times M$ - space.\\
(ii) The transposition map $\tau$ defines an isomorphism of $G\times M$ - spaces $\tau:E_{n,k} \to \kappa(E_{k,n})$.
\end{lem}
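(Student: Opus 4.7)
The plan for (i) is to define an explicit bijection $G/U \to E_{n,k}$ sending $gU$ to the pair $(\phi_g, \psi_g)$, where $\phi_g = g|_{F^n}: F^n \hookrightarrow F^{n+k}$ is the linear map whose matrix consists of the first $n$ columns of $g$, and $\psi_g = \pi \circ g^{-1}: F^{n+k} \twoheadrightarrow F^k$, with $\pi$ the projection onto the last $k$ coordinates. Since $\pi \circ \phi_g = \pi|_{F^n} = 0$, one has $\psi_g \circ \phi_g = 0$, so $(\phi_g, \psi_g) \in E_{n,k}$.

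First I would verify well-definedness on $G/U$. Since $U$ is the group of block-upper-triangular matrices with $n \times n$ and $k \times k$ identity blocks on the diagonal, right multiplication by $U$ preserves the first $n$ columns, and a one-line check gives $\pi \circ u^{-1} = \pi$ for $u \in U$; hence both $\phi_g$ and $\psi_g$ depend only on $gU$. Conversely, if $(\phi_g,\psi_g)=(\phi_{g'},\psi_{g'})$, then $h := g^{-1} g'$ fixes $e_1, \ldots, e_n$ and satisfies $\pi \circ h = \pi$, forcing $h \in U$ from its block form. Surjectivity is immediate: given $(\phi, \psi) \in E_{n,k}$, use $\phi$ for the first $n$ columns of $g$ and take any $\psi$-preimages of the standard basis of $F^k$ for the last $k$ columns; exactness of the sequence guarantees $g$ is invertible. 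For $G \times M$-equivariance, writing $m = (h_1, h_2) \in M$ as $\diag(h_1, h_2)$, a direct computation gives $\phi_{g g' m^{-1}} = g\, \phi_{g'}\, h_1^{-1}$ on the first $n$ columns, while $\pi \circ m = h_2 \circ \pi$ yields $\psi_{g g' m^{-1}} = h_2\, \psi_{g'}\, g^{-1}$, matching the action on $E_{n,k}$ defined earlier.

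For (ii), the pair $(\psi^t, \phi^t)$ lies in $E_{k,n}$ because $\phi^t \circ \psi^t = (\psi \circ \phi)^t = 0$, with $\psi^t$ injective and $\phi^t$ surjective. Equivariance then reduces to checking that applying $\tau$ to $(g \phi h_1^{-1}, h_2 \psi g^{-1})$ produces $((g^t)^{-1} \psi^t h_2^t,\ (h_1^t)^{-1} \phi^t g^t)$, which coincides with the natural $E_{k,n}$-action of $\kappa(g, h_1, h_2) = ((g^t)^{-1}, (h_1^t)^{-1}, (h_2^t)^{-1})$ on $(\psi^t, \phi^t)$. There is no substantive obstacle in either part; the only bookkeeping care is to track the order of the $GL_n$ and $GL_k$ factors in $M = GL_n \times GL_k$ against the ordering natural for $E_{k,n}$, and this is precisely what the $\kappa$-twist is set up to reconcile.
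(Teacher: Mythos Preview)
Your argument is correct and is exactly the straightforward verification the paper has in mind; the paper itself gives no proof beyond declaring the lemma ``straightforward.'' One small clarification worth making explicit for the reader: in part (ii) you are implicitly using that the $G\times M$-action on $E_{k,n}$ is defined with the $\GL_n$- and $\GL_k$-factors of $M$ matched to the dimensions (so $h_1\in\GL_n$ acts on the surjection side and $h_2\in\GL_k$ on the injection side), which is what makes your final identity line up; you acknowledge this in your closing remark, but stating it as the definition rather than as bookkeeping would make the equivariance check read more cleanly.
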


\begin{notn}
Denote by $C_{n,k}:E_{n,k}\times E_{k,n} \to X_{n,k}$ the composition map given by
$$C_{n,k}((\phi_1,\psi_1),(\phi_2,\psi_2)):=(\psi_2 \circ \phi_1,\psi_1 \circ \phi_2).$$
\end{notn}

The following lemma is straightforward.

\begin{lem}
$ $\\
(i) $C_{n,k}$ defines a bijection between $G\times M$ -orbits on $E_{n,k}\times E_{k,n}$ and  $G$-orbits on $X_{n,k}$.\\
(ii) $C_{n,k} \circ T = \theta \circ C_{n,k}$ .
\end{lem}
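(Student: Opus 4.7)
The plan is to verify both parts by direct computation, with part (ii) being an immediate unwinding of the definitions and part (i) requiring a brief structural analysis of $X_{n,k}$. For part (ii), I would apply $C_{n,k}$ to $T(e_1, e_2) = (\tau(e_2), \tau(e_1))$ and use that transposition reverses composition order; each coordinate of the output is then the transpose of the corresponding coordinate of $C_{n,k}(e_1, e_2)$, which is exactly $\theta(C_{n,k}(e_1, e_2))$.

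For part (i), I would proceed in three steps. First, check that $C_{n,k}$ lands in $X_{n,k}$: with $A = \phi_1 \circ \psi_2$ and $B = \phi_2 \circ \psi_1$, the vanishings $AB = 0$ and $BA = 0$ reduce to the identities $\psi_i \circ \phi_i = 0$ coming from the middle exactness of each $e_i$, while the rank conditions follow from the injectivity of the $\phi_i$ combined with the surjectivity of the $\psi_i$. Second, a direct matrix computation shows that the $M$-action on $E_{n,k} \times E_{k,n}$ is absorbed by the composition defining $C_{n,k}$ (the factors $h_1, h_2$ cancel in consecutive pairs), while the $G$-action passes through as conjugation on $X_{n,k}$; so $C_{n,k}$ descends to a well-defined map of orbit spaces.

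The substantive step is bijectivity of this descent. For surjectivity, given $(A, B) \in X_{n,k}$, the relations $AB = BA = 0$ together with the rank conditions force $\mathrm{im}(A) = \ker(B) =: V_1$ (of dimension $n$) and $\mathrm{im}(B) = \ker(A) =: V_2$ (of dimension $k$); any choice of isomorphisms $F^n \xrightarrow{\sim} V_1$ and $F^k \xrightarrow{\sim} V_2$ (playing the role of $\phi_1, \phi_2$) then uniquely determines, via composition with $A$ and $B$, the required maps $\psi_2, \psi_1$, and the exactness of the resulting sequences is automatic because $\ker(\psi_i) = V_{3-i} = \mathrm{im}(\phi_{3-i})$ by construction. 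For injectivity on orbits, if two preimages yield the same $(A, B)$, the corresponding pairs of isomorphisms $F^n \to V_1$ and $F^k \to V_2$ differ by a unique $(h_1, h_2) \in M$, and the identities $A = \phi_1 \psi_2 = \phi_1' \psi_2'$ and $B = \phi_2 \psi_1 = \phi_2' \psi_1'$ then force the $\psi_i$ to transform by the same pair, exhibiting the required $M$-action.

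The main obstacle, such as it is, is purely bookkeeping: keeping the asymmetric $M$-actions on $E_{n,k}$ and $E_{k,n}$ straight, and confirming that the single element $(h_1, h_2) \in M$ recovered from the $\phi$-data simultaneously intertwines all four components of the pair of exact sequences. There is no serious geometric content — conceptually, the lemma records the observation that an element of $X_{n,k}$ canonically determines the pair of subspaces $(\mathrm{im}(A), \mathrm{im}(B))$ with the induced operators, while a pair in $E_{n,k} \times E_{k,n}$ is precisely this data equipped with a labelling of these two subspaces by $F^n$ and $F^k$.
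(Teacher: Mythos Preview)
Your argument is correct and is precisely the straightforward verification the paper omits (the lemma is stated without proof, merely labelled ``straightforward''). One small bookkeeping slip: in the surjectivity step the exactness check should read $\ker(\psi_i)=V_i=\operatorname{im}(\phi_i)$ rather than $V_{3-i}$, since for instance $\psi_1$ is built from $B$ and hence $\ker\psi_1=\ker B=V_1=\operatorname{im}\phi_1$.
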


\begin{cor}
The geometric statement implies that all $G\times M$ -orbits on $E_{n,k}\times E_{k,n}$ are $T$-invariant.
\end{cor}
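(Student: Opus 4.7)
The plan is to deduce the corollary as an immediate formal consequence of the preceding (unlabeled) lemma together with Proposition \ref{thm:Geo}. Given an arbitrary $G \times M$-orbit $\mathcal{O} \subset E_{n,k} \times E_{k,n}$, I will push $\mathcal{O}$ forward through $C_{n,k}$, invoke the geometric statement to obtain $\theta$-invariance of the resulting $G$-orbit on $X_{n,k}$, and then pull that invariance back to $E_{n,k} \times E_{k,n}$ via the orbit bijection.

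Concretely: by part (i) of the preceding lemma, $C_{n,k}(\mathcal{O})$ is a single $G$-orbit in $X_{n,k}$, and the assignment $\mathcal{O} \mapsto C_{n,k}(\mathcal{O})$ is a bijection on orbit sets. By Proposition \ref{thm:Geo}, this $G$-orbit is $\theta$-invariant, so $\theta(C_{n,k}(\mathcal{O})) = C_{n,k}(\mathcal{O})$. Part (ii) of the same lemma, which asserts $C_{n,k} \circ T = \theta \circ C_{n,k}$, then yields
\[
C_{n,k}(T(\mathcal{O})) \;=\; \theta(C_{n,k}(\mathcal{O})) \;=\; C_{n,k}(\mathcal{O}).
\]

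It remains to observe that $T(\mathcal{O})$ is itself a $G \times M$-orbit, so that the orbit bijection can be inverted to conclude $T(\mathcal{O}) = \mathcal{O}$. This follows from the $\kappa$-equivariance of $\tau : E_{n,k} \to \kappa(E_{k,n})$ recorded in Lemma \ref{lem:Geo}(ii): writing $T$ as the coordinate swap followed by applying $\tau$ in each factor, one checks that $T$ intertwines the $G \times M$-action with its Cartan twist, and therefore permutes $G \times M$-orbits. I do not expect any substantive obstacle here; the corollary is essentially a bookkeeping consequence of results already in hand, and the only point that deserves a moment of care is the compatibility of $T$ with the $G \times M$-orbit partition.
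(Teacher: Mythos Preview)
Your argument is correct and is precisely the deduction the paper has in mind; the corollary is stated without proof because it follows immediately from parts (i) and (ii) of the preceding lemma together with Proposition \ref{thm:Geo}, exactly as you spell out. Your extra care in noting that $T(\mathcal{O})$ is again a $G\times M$-orbit (via the $\kappa$-equivariance of $\tau$ from Lemma \ref{lem:Geo}(ii)) is the only nontrivial point, and you handle it correctly.
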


\begin{cor}
The geometric statement implies Theorem \ref{thm:Sc}.
\end{cor}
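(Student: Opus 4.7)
The plan is to chain the Gelfand--Kazhdan type criterion (Corollary \ref{cor:MultFree}) with the BGKZ vanishing criterion (Corollary \ref{cor:BGKZ}), applied via the identification $G/U \cong E_{n,k}$. Set $H := G \times M$, which is a product of general linear groups so that Corollary \ref{cor:MultFree} is available. By Lemma \ref{lem:Geo}(i), $\cH(G/U) \cong \cH(E_{n,k}) = \Sc(E_{n,k},\cF)$ as $H$-representations, where $\cF$ is the $H$-equivariant sheaf of smooth measures on $E_{n,k}$. Therefore Theorem \ref{thm:Sc} reduces to the multiplicity free property of $\Sc(E_{n,k},\cF)$; by Corollary \ref{cor:MultFree}, this reduces further to the statement that every diagonally $H$-invariant distribution on $E_{n,k} \times \kappa(E_{n,k})$ with coefficients in $\cF \boxtimes \kappa(\cF)$ is also invariant under the coordinate swap.

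Next I invoke Lemma \ref{lem:Geo}(ii): the map $\tau$ furnishes an $H$-equivariant isomorphism $\kappa(E_{n,k}) \cong E_{k,n}$, hence an identification
$$E_{n,k} \times \kappa(E_{n,k}) \;\cong\; E_{n,k} \times E_{k,n}$$
of $H$-spaces, under which the coordinate swap becomes exactly the involution $T$ (this is precisely how $T$ was defined from $\tau$). Since $T$ is an algebraic involution normalizing the $H$-action on $E_{n,k} \times E_{k,n}$, Corollary \ref{cor:BGKZ} applies, and it suffices to show that every $H$-orbit on $E_{n,k} \times E_{k,n}$ is $T$-invariant.

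This last assertion is delivered by the corollary immediately preceding the one I am proving: the composition map $C_{n,k}$ induces a bijection between $H$-orbits on $E_{n,k} \times E_{k,n}$ and $G$-orbits on $X_{n,k}$ and satisfies $C_{n,k} \circ T = \theta \circ C_{n,k}$, so the geometric statement (Proposition \ref{thm:Geo}) translates directly into $T$-invariance of every $H$-orbit. The only delicate point I anticipate is verifying that the equivariant structure on $\cF \boxtimes \kappa(\cF)$ is transported through $\tau$ so that the swap on sheaf coefficients genuinely matches $T$ on the transported sheaf; this is a bookkeeping check that follows from the definition of $\kappa(\cF)$ in \S\S \ref{subsec:GK} together with the fact that $\tau$ is an isomorphism of $l$-spaces and therefore pulls back the measure sheaf canonically.
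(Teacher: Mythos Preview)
Your argument is correct and follows the same route as the paper: identify $G/U$ with $E_{n,k}$ via Lemma \ref{lem:Geo}(i), transport $\kappa(E_{n,k})$ to $E_{k,n}$ via Lemma \ref{lem:Geo}(ii) so that the coordinate swap becomes $T$, then apply Corollary \ref{cor:BGKZ} (using the orbit $T$-invariance supplied by the preceding corollary) and finish with Corollary \ref{cor:MultFree}. The paper's own proof is the one-line citation of exactly these four ingredients, so your write-up is simply a spelled-out version of it; the sheaf bookkeeping you flag is indeed routine since $\tau$ is an isomorphism of $l$-spaces and the measure sheaf is a line bundle.
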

This corollary follows from the previous corollary, Lemma \ref{lem:Geo}, Corollary \ref{cor:BGKZ} and Corollary \ref{cor:MultFree}.

\section{Proof of the geometric statement (Proposition \ref{thm:Geo})} \label{sec:PFGeoStat}
\setcounter{lemma}{0}

The proof is by induction on $n$. From now on we assume that the geometric statement holds for all dimensions
smaller than $n$.
\begin{rem}
The proof that will be given here is valid for any field $F$.
\end{rem}

We will use the following  lemma.
\begin{lemma}[Key Lemma]\label{lem:Key}
Let $G':=GL_k$. Let $P_+'$ be its parabolic subgroup. Let $P_-'$ be the opposite parabolic. Let $P''$ be the subgroup of $P_+'\times P_-'$ consisting of pairs with the same
Levi part. Consider the two sided action of $P_+'\times P_-'$  on $G$ 
(given by $(p_1,p_2)g:=p_1gp_2^{-1}$) and its restriction to $P''$.

Then any $P''$ orbit on $G'$ is transposition invariant.
\end{lemma}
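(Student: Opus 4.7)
The plan is to verify the lemma by a direct case analysis, exploiting the restriction $k \leq 2$ that is in force throughout the paper. First I would reduce to cases according to the shape of $P_+'$. For $k \leq 2$ the only possibilities are $P_+' = G'$ and, when $k = 2$, $P_+'$ is a Borel subgroup. If $P_+' = G'$ then $P_-' = G'$ as well and $P'' = \{(g,g) : g \in G'\}$ acts on $G'$ by conjugation, so the claim reduces to the classical statement that every matrix is conjugate to its transpose; this holds over any field and is essentially trivial for $k \leq 2$ via rational canonical form (matching characteristic polynomial determines similarity class in the $2 \times 2$ case).

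The remaining and only genuinely interesting case is $k = 2$ with $P_+' = B$ the upper Borel and $P_-' = B^-$ the lower Borel (the opposite choice is symmetric under transposition). Here I would write elements of $P''$ as pairs $p_+ = \begin{pmatrix}\alpha & x \\ 0 & \delta\end{pmatrix}$, $p_- = \begin{pmatrix}\alpha & 0 \\ y & \delta\end{pmatrix}$ with $\alpha, \delta \in F^\times$ and $x, y \in F$, and for $g = \begin{pmatrix} p & q \\ r & s \end{pmatrix} \in \GL_2(F)$ simply expand $p_+ g p_-^{-1}$. A short computation shows that the $(2,2)$-entry of $p_+ g p_-^{-1}$ is always $s$, so $g_{22}$ is an orbit invariant, and of course $\det g$ is preserved as well. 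I would then show that the pair $(g_{22},\det g)$ in fact parameterizes the $P''$-orbit by producing an explicit representative in each subcase: when $s \neq 0$, one chooses $x,y$ so as to annihilate both off-diagonal entries, yielding the diagonal representative $\diag\bigl((ps-qr)/s,\, s\bigr)$; when $s = 0$ one has $qr \neq 0$, and one first chooses $x, y$ to annihilate the $(1,1)$-entry and then uses the remaining torus freedom $\alpha/\delta$ to rescale the off-diagonal entries, arriving at the representative $\begin{pmatrix} 0 & 1 \\ qr & 0 \end{pmatrix}$.

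Since the invariants $g_{22}$ and $\det g$ are manifestly preserved by transposition, $g$ and $g^t$ fall into the same subcase with the same canonical representative and therefore lie in the same $P''$-orbit, which completes the proof. The only mild obstacle is the bookkeeping in the $s = 0$ subcase, where one must genuinely use both the unipotent and the torus freedom inside $P''$ to reach a canonical form; there is no deeper difficulty, which is consistent with the paper's remark that the analogous statement fails for $k \geq 3$, precisely because $(g_{22},\det g)$ no longer constitute a complete set of orbit invariants in higher rank.
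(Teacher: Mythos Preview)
Your proposal is correct and is exactly the ``straightforward computation'' the paper invokes but does not write out: the paper's entire proof is the sentence ``Since $k \leq 2$, this lemma is a straightforward computation.'' Your case split ($P_+'=G'$ versus $P_+'$ a Borel in $\GL_2$) and your identification of the complete orbit invariants $(g_{22},\det g)$ in the Borel case constitute a clean execution of that computation.
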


Since $k \leq 2$, this lemma is a straightforward computation.
\begin{remark}
The analogous statement for $k \geq 3$ is not true. In fact, this lemma is the only place where we use the assumption $k \leq 2$.
\end{remark}

\begin{notn}
Denote $X':=X'_{n,k}:=\{(A,B) \in  X\, | \, A \text{ is nilpotent}\}$.
\end{notn}

We will show that the geometric statement follows from the following proposition.
\begin{prop}\label{prop:GeoNilp}
Any $G$-orbit in  $X'_{n,k}$ is $\theta$-invariant.
\end{prop}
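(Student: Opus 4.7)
The plan is to pin down the $G$-conjugacy invariants of a pair $(A,B)\in X'_{n,k}$ explicitly and to reduce $\theta$-invariance of orbits to the Key Lemma~\ref{lem:Key}. First, dimensions are forced: since $\operatorname{rank} A = n$ and $\dim V = n+k$, the relations $AB=BA=0$ together with $\operatorname{rank} B = k$ give $\Im B = \Ker A$ and $\Ker B = \Im A$. Hence $B$ is equivalent to the induced isomorphism $\bar B\colon\Coker A\xrightarrow{\sim}\Ker A$, and the set $Y_A:=\{B':AB'=B'A=0,\ \operatorname{rank}B'=k\}$ is in canonical bijection with $\operatorname{Iso}(\Coker A,\Ker A)$.

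Next I would use the Deligne filtration $\cD_A^{\geq i}$ of $A$ on $V$ from \S\S\ref{subsec:DelFilt}. It induces flags $\cD_{A,+}$ on $\Ker A$ and $\cD_{A,-}$ on $\Coker A$, together with the canonical isomorphism $\mu_A$ between their associated gradeds. The centralizer $Z_G(A)$ preserves $\cD_A^{\geq i}$ automatically, so its image in $GL(\Ker A)\times GL(\Coker A)$ lies in the subgroup
$$P'' \;:=\; \{(p_+,p_-)\in P^+\times P^-\mid \Gr(p_+)=\mu_A\,\Gr(p_-)\,\mu_A^{-1}\},$$
where $P^\pm$ are the parabolics stabilizing $\cD_{A,\pm}$. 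Choosing bases of $\Ker A$ and $\Coker A$ compatible with $\mu_A$ we identify both with $F^k$ so that $P^+$ and $P^-$ become opposite parabolics and $P''$ coincides with the group of the Key Lemma. An $\mathfrak{sl}_2$-splitting of $\cD_A$ produces an explicit section $P''\to Z_G(A)$, hence the projection $Z_G(A)\twoheadrightarrow P''$ is surjective. Since $g\cdot B$ corresponds to $g|_{\Ker A}\circ\bar B\circ(g|_{\Coker A})^{-1}$, the $Z_G(A)$-orbits on $Y_A$ are precisely the $P''$-orbits on $GL_k$ under the action $(p_+,p_-)\cdot M = p_+Mp_-^{-1}$.

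To finish, pick any $g_0\in GL(V)$ with $g_0Ag_0^{-1}=A^t$ (every matrix is similar to its transpose). Then $(A,B)$ is $G$-conjugate to $(A^t,g_0Bg_0^{-1})$, so it suffices to show that $g_0Bg_0^{-1}$ and $B^t$ lie in the same $Z_G(A^t)$-orbit. Transferring to $GL_k$ via the identification above, $g_0Bg_0^{-1}$ becomes $(Q_+,Q_-)\cdot M$ for some $(Q_+,Q_-)\in P''_{A^t}$ (because $g_0$ intertwines the Deligne data of $A$ and of $A^t$, hence places its restrictions into $P^+$ and $P^-$ with matching Levi parts), while $B^t$ corresponds---via the canonical dualities $\Ker A^t\cong(\Coker A)^*$ and $\Coker A^t\cong(\Ker A)^*$ induced by the standard pairing on $V$---exactly to the matrix $M^t$. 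The Key Lemma asserts that the $P''$-orbits of $M$ and $M^t$ agree, so the orbits of $(Q_+,Q_-)\cdot M$ and $M^t$ agree, yielding the required element of $Z_G(A^t)$ and closing the argument.

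The main obstacle is the bookkeeping of the second and third paragraphs: proving surjectivity $Z_G(A)\twoheadrightarrow P''$ by explicit lifts from an $\mathfrak{sl}_2$-splitting, and checking that, in bases chosen simultaneously compatible with the Deligne filtration and with the standard pairing $V\otimes V\to F$, the involution $B\mapsto B^t$ corresponds on $GL_k$ to matrix transposition modulo the $P''$-action. Once these compatibilities are laid out the Key Lemma (the sole place the hypothesis $k\leq 2$ is invoked) finishes the proof; the induction on $n$ is not required for this nilpotent case and will only be used later in the reduction from Proposition~\ref{thm:Geo} to Proposition~\ref{prop:GeoNilp}.
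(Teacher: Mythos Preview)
Your proposal is essentially the paper's proof: reduce the problem to the action of the centralizer $Z_G(A)$ on the induced isomorphisms $\Coker A\xrightarrow{\sim}\Ker A$, identify the image of $Z_G(A)$ in $\GL(\Ker A)\times\GL(\Coker A)$ with the group $\cP_A$ (your $P''$) using the Deligne filtration, and then invoke the Key Lemma. Your surjectivity statement $Z_G(A)\twoheadrightarrow P''$ is exactly Lemma~\ref{lem:nuAPA}; the paper proves it by a direct Lie-algebra computation on Jordan blocks rather than by an $\mathfrak{sl}_2$-section, but the content is the same.

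The one substantive difference is how the transpose is organized. You choose an arbitrary $g_0$ with $g_0Ag_0^{-1}=A^t$ and then compare two identifications of $\Ker A^t$, $\Coker A^t$ with $F^k$ (one via $g_0$, one via the standard duality), asserting that the discrepancy lies in $P''_{A^t}$. This is true---both identifications respect the Deligne filtrations \emph{and} the graded isomorphism $\mu$, since $\mu$ is built from powers of $A$---but it is exactly the bookkeeping you flag as unfinished. The paper bypasses this entirely by fixing from the outset a nondegenerate bilinear form $Q$ with $A_Q^t=A$; then $B$ and $B_Q^t$ both live in $\Hom(\Coker A,\Ker A)$, the transpose on this Hom-space is induced directly by $Q$, and Lemma~\ref{lem:DualKey} (the reformulated Key Lemma) applies without any change of frame. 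Adopting $Q$ would eliminate the loose end in your third paragraph.
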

\begin{proof}[Proof that Proposition \ref{prop:GeoNilp} implies Theorem \ref{thm:Geo}]
Let $(A,B) \in X -X'.$ We have to show that there exists $g \in G$ such that $gAg^{-1}=A^t$ and $gBg^{-1}=B^t$.

Decompose $F^{n+k}:=V \oplus W$ such that $A=A' \oplus A''$ where $A'$ is a nilpotent operator on $V$ and $A''$
is an invertible operator on $W$. Since $AB=BA=0$, we have $B = B' \oplus 0$, where $B'$  is an operator on $V$  and $0$ denotes the
zero operator on $W$. Without loss of generality we may assume that $V$ and $W$ are coordinate spaces.

By the induction assumption, there exists $g_1 \in \GL(V)$ such that $g_1A'g_1^{-1}=A'^t$ and $g_1B'g_1^{-1}=B'^t$.
It is well known that  there exists $g_2 \in \GL(W)$ such that $g_2A''g_2^{-1}=A''^t$. Take $g:=g_1\oplus g_2$.
\end{proof}

\begin{notn}
Let $A$ be a nilpotent operator on a vector space $V$. Let $\nu_A:\GL(V)_A \to \GL(Ker A)\times \GL(Coker A)$ denote the map defined by
$\nu_A(g):=(g|_{Ker A}, g|_{Coker A})$. Denote also
\begin{multline*}
\cP_A:= \{g,h \in \GL(Ker A)\times \GL(Coker A) \, | \, g \text{ preserves } \mathcal{D}_{A,+}, h \text{ preserves } \mathcal{D}_{A,-}
\text{ and }\\ Gr_{\mathcal{D}_{A,+}}(g) \text{ corresponds to } Gr_{\mathcal{D}_{A,-}}(h) \text{under the identification } \mu_A \}.
\end{multline*}
\end{notn}

\begin{lem} \label{lem:nuAPA}
Let $A$ be a nilpotent operator on a vector space $V$. Then $\Im(\nu_A) = \cP_A$.
\end{lem}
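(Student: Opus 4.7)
The plan is to prove the two inclusions $\Im(\nu_A) \subseteq \cP_A$ and $\cP_A \subseteq \Im(\nu_A)$ separately. The first is essentially formal, while the second requires a direct construction using the Jordan decomposition of $A$.

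For the easy inclusion $\Im(\nu_A) \subseteq \cP_A$, I would argue as follows. Given $g \in \GL(V)$ commuting with $A$, the translated filtration $g \cdot \cD_A^{\geq \bullet}(V)$ again satisfies Deligne's two defining properties (that $A$ has the required degree and that the $A^l$-maps on the associated graded are isomorphisms), because conjugation by $g$ fixes $A$. Uniqueness of the Deligne filtration then forces $g$ to preserve $\cD_A^{\geq \bullet}(V)$, and hence $g|_{\Ker A}$ preserves $\cD_{A,+}$ while $g|_{\Coker A}$ preserves $\cD_{A,-}$. The compatibility of the graded pieces under $\mu_A$ is immediate, since $\mu_A$ is induced by $A^l$ and $g$ commutes with $A^l$.

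For the hard inclusion $\cP_A \subseteq \Im(\nu_A)$, I would fix a Jordan basis of $A$, obtaining an $A$-module decomposition $V \cong \bigoplus_i \C^{m_i} \otimes J_i$, where $J_i$ is the standard size-$i$ Jordan block with multiplicity $m_i$. Under this, $\Ker A \cong \bigoplus_i \C^{m_i} \cong \Coker A$, the Deligne filtrations correspond to the flag indexed by block size, and $\mu_A$ is the natural identification between matched summands. The filtration-preservation conditions then force $g = (g_{ij})$ to be block-triangular in one direction (nonzero entries only for $i \geq j$, say) and $h = (h_{ij})$ to be block-triangular in the opposite direction ($i \leq j$), while the compatibility condition yields $g_{ii} = h_{ii}$. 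To construct a lift $\tilde g$, I would proceed block by block: for each pair $(i,j)$, I seek an element of $\Hom_A(\C^{m_j}\otimes J_j,\, \C^{m_i}\otimes J_i)$ whose restrictions to $\Ker A$ and $\Coker A$ realize $g_{ij}$ and $h_{ij}$. A direct computation inside $\Hom_A(J_j,J_i)$ (a space of dimension $\min(i,j)$) shows that the Ker- and Coker-restriction functionals coincide when $i = j$, that the Coker-restriction is forced to vanish while the Ker-restriction is free when $i > j$, and that the two roles swap when $i < j$. These ``forced zeros'' match exactly the triangularity vanishing of $g_{ij}$ and $h_{ij}$, so the required lift exists in each block.

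The hard part will be aligning the triangularity directions imposed by the filtrations $\cD_{A,+}$ and $\cD_{A,-}$ with the ``which restriction is forced to vanish'' dichotomy inside $\Hom_A(J_j,J_i)$; this is essentially a weight bookkeeping exercise, but it is precisely what makes the block-by-block lift possible. Once this is verified, the local lifts assemble into a single $\tilde g \in \End(V)$ commuting with $A$. Because $\tilde g$ preserves the Deligne filtration of $V$ (by the easy direction already established) and its action on the associated graded is $\bigoplus_i g_{ii}$, which is invertible since $g \in \GL(\Ker A)$, the operator $\tilde g$ itself lies in $\GL(V)_A$ and satisfies $\nu_A(\tilde g) = (g,h)$.
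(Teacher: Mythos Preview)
Your proof is correct and follows the same route as the paper's: a block-by-block analysis of $\Hom_A(J_j,J_i)$ after choosing a Jordan decomposition, matching the triangularity forced on $(g,h)\in\cP_A$ by the two filtrations against the vanishing pattern of the Ker- and Coker-restriction maps. The only difference is packaging: the paper passes to the Lie algebra and shows the linear map $d\nu_A:\gl(V)_A\to\mathfrak{p}$ is surjective by exhibiting, for each pair $(i,j)$, a one-dimensional subspace $L_{ij}\subset (V_i^*\otimes V_j)^A$ of the appropriate weight that hits $\mathfrak{p}_{ij}$, whereas you stay at the group level and build an explicit lift $\tilde g$.

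One small wrinkle in your write-up: your invertibility argument invokes ``the easy direction already established'' to conclude that $\tilde g$ preserves the Deligne filtration on $V$, but that argument used uniqueness of the filtration under an \emph{invertible} $g$, so as written this is circular. The fix is immediate from your own block computation: every map in $\Hom_A(J_j,J_i)$ raises Deligne weight by at least $|i-j|\ge 0$ (your ``simplest'' lifts raise it by exactly $|i-j|$, and composing with powers of $A$ only raises it further), so all of $\End_A(V)$ preserves the filtration. Then the weight-zero part of $\tilde g$ is exactly $\bigoplus_i g_{ii}\otimes\id_{J_i}$, and your associated-graded argument goes through.
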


\begin{proof}
Clearly  $\Im(\nu_A) \subset \cP_A$.
Let $\mathfrak{p}$ denote the Lie algebra of $\cP_A$. It is enough to show that the map $d\nu_A:\gl(V)_A \to \mathfrak{p}$ is onto.
Let $V=\bigoplus V_i$ be the decomposition of $V$ to Jordan blocks w.r.t. the action of $A.$
We have
\begin{align}
& \gl(V)_A = (V^*\otimes V)^A = \bigoplus_{i,j} (V_i^* \otimes V_j)^A\\
& \gl(Ker A)= (V^A)^* \otimes V^A  = \bigoplus_{i,j} (V_i^A)^* \otimes V_j^A\\
& \gl(Coker A)= (V/AV)^* \otimes (V/AV)  = \bigoplus_{i,j} (V_i/AV_i)^* \otimes (V_j/AV_j)
\end{align}

The filtration $\mathcal{D}_{A,+}$ on $Ker A$ gives a natural filtration on $\gl(Ker A)$. It is easy to see that the 1-dimensional space $(V_i^A)^* \otimes V_j^A$ is of degree $\dim V_j - \dim V_i$ w.r.t. this filtration. Similarly $(V_i/AV_i)^* \otimes (V_j/AV_j)$ is of degree  $\dim V_i - \dim V_j$.

Hence $\mathfrak{p} = \bigoplus \mathfrak{p}_{ij},$ where $$\mathfrak{p}_{ij}=
\left \{ \begin{array}{llll} (V_i^A)^* \otimes V_j^A & \quad \dim V_j > \dim V_i\\
(V_i/AV_i)^* \otimes (V_j/AV_j) & \quad \dim V_j < \dim V_i\\
\{(X,Y) \in (V_i^A)^* \otimes V_j^A \oplus (V_i/AV_i)^* \otimes (V_j/AV_j) \, | \, X \text{ corresponds to }Y & \\
\text{ under the
identification given by } A^{\dim V_i -1}  \} & \quad \dim V_j = \dim V_i \\
\end{array} \right .$$
This 
decomposition
 gives a decomposition $d\nu_A = \bigoplus \nu_{ij}$, where $\nu_{ij}:(V_i^* \otimes V_j)^A \to \mathfrak{p}_{ij}$.
It is enough to show that $\nu_{ij}$ is surjective for any $i$ and $j$.
Choose a gradation on $V_i$ which is compatible with the Deligne filtration.  Let $L_{ij} \subset (V_i^* \otimes V_j)^A$ be the 1-dimensional subspace of vectors of weight  $\dim V_j - \dim V_i$ w.r.t. this gradation.
It is easy to see that $\nu_{ij}|_{L_{ij}}$ is surjective.
\end{proof}

The following lemma is a reformulation of the Key Lemma.
\begin{lem} \label{lem:DualKey}
Let $V$ and $W$ be linear spaces of dimension $k$. Suppose that we are given a non-degenerate pairing between $V$ and $W$. Let
$\mathcal{F}$ be a descending filtration on $V$ and $\mathcal{G}$ be the dual, ascending,  filtration on $W$.
Suppose that we are given an isomorphism of graded linear spaces $\mu : Gr_{\mathcal{F}}(V) \to  Gr_{\mathcal{G}}(W)$.
Let
\begin{multline*}
\cP:= \{g,h \in \GL(V)\times \GL(W) \, | \, g \text{ preserves } \mathcal{F}, h \text{ preserves } \mathcal{G}
\text{ and }\\ Gr_{\mathcal{F}}(g) \text{ corresponds to } Gr_{\mathcal{G}}(h) \text{under the identification } \mu \}.
\end{multline*}
Let $\cP$ act on $\Hom(V,W)$ by $(g,h)(\phi):= h \circ \phi \circ g^{-1}$. Note that the pairing between $V$ and $W$ defines a notion of
transposition on $\Hom(V,W)$.

Then any $\cP$-orbit on $\Hom(V,W)$ is invariant under transposition.
\end{lem}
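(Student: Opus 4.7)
The plan is to establish this as a precise reformulation of the Key Lemma (Lemma~\ref{lem:Key}). First, choose splittings of the filtrations $\mathcal{F}$ on $V$ and $\mathcal{G}$ on $W$, yielding canonical isomorphisms $V \simeq \Gr_\mathcal{F}(V)$ and $W \simeq \Gr_\mathcal{G}(W)$. Composing with $\mu$ identifies $V$ and $W$ with a single $k$-dimensional vector space $F^k$. Since $\mathcal{G}$ is the dual of $\mathcal{F}$ under the pairing, the graded pieces $\Gr^j_\mathcal{F}(V)$ and $\Gr^j_\mathcal{G}(W)$ have matching dimensions and are naturally paired by $B$; one can arrange the splittings so that, after the $\mu$-identification, $B$ becomes the standard ``graded'' pairing on $F^k$.

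Under this identification, the stabilizer of $\mathcal{F}$ in $\GL(V)$ is a parabolic $P_+' \subset \GL_k$, and the stabilizer of $\mathcal{G}$ in $\GL(W)$ is the opposite parabolic $P_-'$. The requirement that $\Gr(g)$ and $\Gr(h)$ correspond under $\mu$ translates precisely into $(g,h) \in P_+' \times P_-'$ sharing a common Levi component, so $\cP$ becomes the group $P''$ of the Key Lemma. The space $\Hom(V,W)$ becomes $M_k(F)$, the action $(g,h)(\phi) = h\phi g^{-1}$ becomes the two-sided $P''$-action, and transposition of $\phi$ via the pairing $B$ becomes ordinary matrix transposition.

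With these identifications in place, the claim restricted to the open subvariety $\GL(V,W) \subset \Hom(V,W)$ is exactly the Key Lemma. Extension to all of $\Hom(V,W) = M_k(F)$ is automatic, since the Key Lemma is proved by a direct inspection of orbit representatives and the computation never uses the invertibility of $\phi$; the same argument handles arbitrary $\phi \in M_k(F)$. This yields transposition-invariance of every $\cP$-orbit on $\Hom(V,W)$. The main obstacle is the bookkeeping in the first step: choosing splittings of $\mathcal{F}$ and $\mathcal{G}$ so that the pairing $B$ becomes standard and the two stabilizers come out as genuinely opposite parabolics. This is a compatibility check that follows from the duality between $\mathcal{F}$ and $\mathcal{G}$ together with the graded-isomorphism property of $\mu$; once it is in place, the remaining reduction to the Key Lemma is formal.
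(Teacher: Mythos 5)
Your reduction is correct and matches the paper, which simply declares Lemma \ref{lem:DualKey} to be a reformulation of the Key Lemma and offers no further proof. Your explicit handling of the non-invertible locus of $\Hom(V,W)$ (which the Key Lemma, being a statement about orbits on $\GL_k$ itself, does not literally cover, even though only invertible $\phi$ arise in the application to Proposition \ref{prop:GeoNilp}) is a point the paper glosses over; as you note, for $k\le 2$ the same direct computation disposes of it.
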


\begin{proof}
[Proof of Proposition \ref{prop:GeoNilp}]
Let $(A,B) \in X'$. We have to show that there exists $g \in G$ such that $gAg^{-1}=A^t$ and $gBg^{-1}=B^t$.
Fix a bilinear form $Q$ on $F^{n+k}$ such that $A^t_Q=A$, where $A^t_Q$ denotes transpose with respect to the form $Q$. It is enough
to show that there exists $g \in G_A$ such that  $gBg^{-1} = B_Q^t$. Note that $\Ker A =\Im B$ and $\Ker B =\Im A$. Denote by
$B':\Coker A \to \Ker A$ the map induced by $B$. Consider the natural action of $\GL(\Coker A) \times \GL(\Ker A)$ on
$\Hom(\Coker A, \Ker A)$. 
Note that $\Ker B_Q^t = \Im A$ and $\Ker A = \Im B_Q^t$ and hence $B_Q^t$ also induces a map $\Coker A \to \Ker A$. Denote this map
by $B''$. Note that $B''$ is the transposition of the map $B'$ with respect to the non-degenerate pairing between $\Coker A$ and $\Ker A$ given by $Q$.
The assertion follows now from Lemma \ref{lem:nuAPA} and Lemma \ref{lem:DualKey}.

\end{proof}

\section{Discussion of the higher rank cases} \label{sec:HiRank}
\setcounter{lemma}{0}

In this section we discuss whether an analog of Theorem \ref{thm:PiRho} holds when $M$ is an arbitrary Levi subgroup.
If $F$ is a finite field, a negative answer to this question can be obtained from a  negative answer to an analogous question for permutation groups. We discuss permutation groups in \S\S \ref{subsec:Perm} and the connection between the two questions in \S\S \ref{subsec:Con}.
The answer we obtain is that
such  analog of Theorem \ref{thm:PiRho}  holds only in the cases at hand.

We discuss the case when $F$ is a local field in \S\S \ref{subsec:LocHighRank}, but we do not reach a conclusion.

Since the results here are negative and mostly known, the
discussion is rather informal and some details are omitted.

\subsection{The analogous problems for the permutation groups} \label{subsec:Perm}$ $

Let $M'=S_{n_1} \times ... \times S_{n_l}$ and $G':=S_{n_1+...+n_l} $. One can ask when $(G',M')$ is a strong Gelfand pair, i.e. when the restriction functor from $G'$ to $M'$ is multiplicity free. The answer is: $(G',M')$ is a strong Gelfand pair if and only if $l \leq 2$ and $\min(n_1,n_2) \leq 2$.
This is well known,
but
let us indicate the proof.

The fact that the pairs $(S_{n+1},S_{n})$ and $(S_{n+2},S_{n}\times S_2)$ are strong Gelfand pairs follows by Theorems \ref{thm:Mult1FunctorIsMult1Module} and \ref{thm:GK} from the fact that every permutation from $G'$ is conjugate by $M'$ to its inverse.

In order to show that other pairs mentioned above are not strong Gelfand pairs, 
we have to show that the algebra of $Ad(M')$-invariant functions on $G'$ with respect to convolution is not commutative unless $l \leq 2$ and $\min(n_1,n_2) \leq 2$.

If $l \geq 3$ then consider the transpositions $\sigma_1=(1,n_1+1)$ and $\sigma_2=(n_1+1,n_2+1)$. It is easy to see that the characteristic functions of their $M'$-conjugacy classes do not commute.
If $l = 2$ and $n_1,n_2 \geq 3$ then consider the cyclic permutations $\sigma_1=(1,2,3,n_1+1,n_1+2,n_1+3)$ and $\sigma_2=(1,n_1+1,n_1+2)$. It is easy to see that the characteristic functions of their $M'$-conjugacy classes do not commute.

\subsection{Connection with our problem for the finite fields} \label{subsec:Con}$ $
Suppose that $F$ is a finite field.
Let $M=GL_{n_1}(F) \times ... \times GL_{n_l}(F)$ and $G:=GL_{n_1+...+n_l}(F)$. Then the multiplicities problem of Jacquet functor between $\cM(G)$ and $\cM(M)$ can be considered as a generalization of a deformation of the multiplicities problem of the restriction functor from $\cM(G')$ to $\cM(M')$.

Indeed, the multiplicities problem of Jacquet functor is equivalent to multiplicities problem of the parabolic induction from $\cM(M)$ to $\cM(G)$. Let $\Sigma:=i_{T_M}^M(\C)$, where $i_{T_M}^M$ denotes the parabolic induction from the torus of $M$ to $M$.
Let $\Pi:=i_{T_G}^G(\C)$. Let $\cA$ be the subcategory of
$\cM(M)$
generated by $\Sigma$ and $\cB$ be the subcategory of
$\cM(G)$
generated by $\Pi$. Then the multiplicities problem of the parabolic induction from $\cA$ to $\cB$ is a special case of the multiplicities problem of the parabolic induction from $\cM(M)$ to $\cM(G)$.
Let $A:=End_{M}(\Sigma)$ and $B:=End_{G}(\Pi)$. Clearly, $\cA$ is equivalent to the category of $A$-modules and $\cB$ is equivalent to the category of $B$-modules. It is well known that $A$ and $B$ are deformations of the group algebras of
$M'$ and $G'$
respectively.
Therefore the multiplicities problem of the parabolic induction from $\cA$ to $\cB$  is a deformation of  the multiplicities problem of the induction from $M'$ to $G'$, which in turn is equivalent to he multiplicities problem of the restriction from $G'$ to $M'$.
In fact, one can show that those deformations are trivializable since those algebras are semisimple.

One can use this argumention in order to show that $(G',M')$ is a strong Gelfand pair only if $l \leq 2$ and $\min(n_1,n_2) \leq 2$.

\subsection{Higher rank cases over local fields} \label{subsec:LocHighRank}$ $

First note that the reduction of Theorem \ref{thm:Sc} to the Key Lemma works without change for arbitrary $k$.
This reduction connects between the Gelfand-Kazhdan criterion for the "multiplicity free" property of the  Jacquet functor from $\GL_{n+k}(F)$ to $\GL_{n}(F) \times \GL_{k}(F)$ and  the Gelfand-Kazhdan criterion for the "multiplicity free" property of the  Jacquet functor from $\GL_{k}(F)$ to an arbitrary Levi subgroup. Therefore we believe that the  "multiplicity free" properties themselves are connected and if one wants to consider the case of arbitrary $k$, he will also have to consider arbitrary Levi subgroups. At the moment we do not have an opinion when the Jacquet functor from $\GL_n(F)$ to an arbitrary Levi subgroup is multiplicity free.
\appendix

\section{Multiplicity free functors } \label{app:MultFree}

\subsection{Proof of Theorem \ref{thm:FunctorIsModule}} \label{subapp:PfFunMod}

\begin{proof}[Proof of Theorem \ref{thm:FunctorIsModule}, (i)]

 For any open compact subgroup $K<H$ denote by $\Sigma_K \subset \cH(H)$ the subspace of right $K$-invariant measures.
Denote $\Pi_K:=\cF(\Sigma_K)$. Since $\cF$ commutes with direct limits, $\lim_K \Pi_K \cong \Pi$. It is easy to see that $K$ acts trivially on the image of $\Pi_K$ in $\Pi$. Hence $\Pi$ is a smooth representation of
$H$ and hence it is a smooth representation of
$H \times H'$.
%
%
%
\end{proof}

For the proof of (ii) we will need several lemmas.

\begin{notn}
Denote by $\cH(H)_0$ the subalgebra of $\cH(H)$ consisting of functions with zero integral.
\end{notn}

\begin{lem} \label{lem:Coinv}
Let $\pi$ be a smooth representation of
$H$. Then $\pi_H = \Coker(\cH(H)_0\otimes \pi \to \pi)$, where by equality we mean equality of quotients of $\pi$.
\end{lem}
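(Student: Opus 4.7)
The plan is to identify the image of the action map $\cH(H)_0 \otimes \pi \to \pi$ with the subspace $N := \Span\{v - gv \mid g \in H,\, v \in \pi\}$, since by definition $\pi_H = \pi/N$. Once this identification is established, the claim about cokernels is automatic.

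To check $N \subseteq \Im(\cH(H)_0 \otimes \pi \to \pi)$, I would start from an arbitrary generator $v - gv$, pick a compact open subgroup $K < H$ fixing $v$ (which exists by smoothness), and consider the measure
\[
\mu := \frac{1}{\vol(K)}\bigl(\mathbf{1}_{gK} - \mathbf{1}_K\bigr)\, dh \in \cH(H),
\]
where $dh$ is a Haar measure on $H$. Its total mass is $0$, so $\mu \in \cH(H)_0$, and a direct computation using $K$-invariance of $v$ gives $\mu \cdot v = gv - v$. Hence every generator of $N$ lies in the image.

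For the reverse inclusion, I would use the smoothness of $\pi$ to argue that any element of $\Im(\cH(H)_0 \otimes \pi \to \pi)$ is a finite sum $\sum_j \mu_j \cdot v_j$ with each $v_j$ fixed by some common compact open subgroup $K$. By refining $K$ if necessary, I can assume each $\mu_j$ is right $K$-invariant, so $\mu_j$ decomposes as a finite sum $\mu_j = \sum_i c_{ij}\, \mathbf{1}_{g_{ij} K}\, dh$ with $\sum_i c_{ij}\vol(K) = \int \mu_j = 0$. Acting on $v_j$ gives $\mu_j \cdot v_j = \sum_i c_{ij}\vol(K)\, g_{ij} v_j$, and the vanishing of $\sum_i c_{ij}\vol(K)$ lets me rewrite this as $\sum_i c_{ij}\vol(K)(g_{ij} v_j - v_j) \in N$.

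The argument is elementary and I do not anticipate a genuine obstacle; the only point requiring a little care is checking that the normalizations of Haar measure are consistent on both sides, and that the ``smoothing'' of $\mu$ by a common $K$ (achieved by replacing $\mu$ with its right $K$-average) does not change either its total mass or its action on vectors fixed by $K$. Both of these are immediate from the definitions.
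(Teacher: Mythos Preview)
Your argument is correct; the two inclusions are verified exactly as you describe, and the smoothing step (replacing $\mu$ by $\mu * e_K$ where $e_K$ is the normalized Haar measure on $K$) indeed preserves both the total integral and the action on $K$-fixed vectors.

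The paper takes a different, more categorical route. Rather than identifying the two subspaces of $\pi$ directly, it observes that for any vector space $V$ (viewed as a trivial $H$-representation, equivalently as an $\cH(H)$-module on which each measure acts by its integral) one has
\[
\Hom_{\C}(\pi_H,V)=\Hom_H(\pi,V)\quad\text{and}\quad \Hom_{\C}\bigl(\Coker(\cH(H)_0\otimes\pi\to\pi),V\bigr)=\Hom_{\cH(H)}(\pi,V),
\]
and then invokes the equivalence $\cM(H)\simeq\{\text{unital }\cH(H)\text{-modules}\}$ (Theorem~\ref{thm:RepUMod}) to identify the right-hand sides; Yoneda then forces the two quotients of $\pi$ to agree. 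Your approach is more elementary and self-contained, avoiding any appeal to the Bernstein--Zelevinsky equivalence; the paper's approach is shorter and makes the conceptual reason transparent (both quotients corepresent the same functor), at the cost of importing that equivalence as a black box.
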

\begin{proof}
Let $V$ be any vector space. We can consider it as a representation of $H$ with trivial action or as a $\cH(H)$-module on which every measure acts by multiplication by its integral. Then
$$\Hom_{\C}(\pi_H,V) = \Hom_{H}(\pi,V) \text{ and } \Hom_{\C}(\Coker(\cH(H)_0\otimes \pi \to \pi),V) = \Hom_{\cH(H)}(\pi,V).$$
By Theorem \ref{thm:RepUMod}, $\Hom_{H}(\pi,V) = \Hom_{\cH(H)}(\pi,V)$ and therefore $$\Hom_{\C}(\pi_H,V) = \Hom_{\C}(\Coker(\cH(H)_0\otimes \pi \to \pi),V)$$ for any vector space $V$. The lemma follows now from the Yoneda lemma.
\end{proof}

\begin{lem}
Let $\pi$ be a smooth representation of
$H$. Let $H$ act on 
$\cH(H)\otimes \pi$ by $g(\mu \otimes v):=(\mu * \de_{g^{-1}})\otimes gv$.

Then $(\cH(H)\otimes \pi)_H = \pi$, where by equality we mean equality of quotients of $\cH(H)\otimes \pi$.
\end{lem}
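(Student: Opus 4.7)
The plan is to identify $\pi$ with $(\cH(H)\otimes\pi)_H$ via the obvious action map
$$\Phi\colon \cH(H)\otimes\pi\to\pi,\qquad \mu\otimes v\;\mapsto\;\mu\cdot v:=\int_H hv\,d\mu(h).$$
This is the quotient map we want to realize.

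The first step is to verify $H$-invariance of $\Phi$ for the twisted action on the source and the trivial action on $\pi$. Using $(\mu*\de_{g^{-1}})(f)=\int f(hg^{-1})\,d\mu(h)$, a direct computation yields
$$\Phi\bigl(g\cdot(\mu\otimes v)\bigr)=(\mu*\de_{g^{-1}})\cdot(gv)=\int_H (hg^{-1})(gv)\,d\mu(h)=\int_H hv\,d\mu(h)=\Phi(\mu\otimes v).$$
Hence $\Phi$ descends to $\bar\Phi\colon(\cH(H)\otimes\pi)_H\to\pi$. Surjectivity of $\bar\Phi$ is immediate: for $v\in\pi$, choose an open compact subgroup $K<H$ fixing $v$ and take $\mu$ to be the normalized Haar measure on $K$; then $\Phi(\mu\otimes v)=v$.

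For injectivity I would change trivialization by an $H$-equivariant isomorphism $T\colon\cH(H)\otimes\pi\to\cH(H)\otimes\pi$ intertwining the untwisted action $g*'(\mu\otimes v):=(\mu*\de_{g^{-1}})\otimes v$ on the source with the given twisted action on the target. Identifying $\cH(H)\otimes\pi$ with the space $\cH(H,\pi)$ of compactly supported locally constant $\pi$-valued densities on $H$, one can take $T(\eta):=[h\mapsto h^{-1}\eta(h)]$, the twist by the inverse action. Well-definedness reduces to the observation that $h\mapsto h^{-1}\eta(h)$ remains locally constant with compact support whenever $\eta$ is, since smoothness of each value of $\eta$ keeps the orbit map locally constant on the compact support. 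Invertibility and the intertwining property $T(g*'x)=g\cdot T(x)$ are routine direct checks. Under the untwisted action the coinvariants factor:
$$\bigl(\cH(H)\otimes\pi\bigr)_H^{*'}\;\cong\;\cH(H)_H\otimes\pi\;\cong\;\C\otimes\pi\;\cong\;\pi,$$
with the first map induced by integration on the first factor. The composite of $T^{-1}$ with this projection is exactly $\Phi$, so $\bar\Phi$ is an isomorphism.

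The only real obstacle is bookkeeping: distinguishing the right-regular action of $H$ on $\cH(H)$ from the convolution action of $\cH(H)$ on $\pi$, and verifying that $T$ actually lands in $\cH(H)\otimes\pi$ rather than in some larger space of $\pi$-valued functions. Once those conventions are in place, the change of trivialization reduces the lemma to the triviality $\cH(H)_H=\C$.
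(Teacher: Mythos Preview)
Your proof is correct and takes a genuinely different route from the paper's. The paper argues by Yoneda: for every smooth $H$-representation $\tau$ it exhibits a natural chain of identifications
\[
\Hom_H\bigl((\cH(H)\otimes\pi)_H,\tau\bigr)=\Hom_{H\times H}(\cH(H)\otimes\pi,\tau)=\cdots=\Hom_H(\pi,\tau),
\]
using tensor--Hom adjunction and the fact that $\Hom_{\cH(H)}(\cH(H),-)$ is the identity on unital modules. Your argument is instead a direct construction: you write down the action map $\Phi$, check invariance and surjectivity by hand, and then prove injectivity by the shearing isomorphism $T(\eta)(h)=h^{-1}\eta(h)$, which untwists the diagonal action to the action on the first factor alone and reduces everything to $\cH(H)_H=\C$.

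Each approach has its virtue. The paper's Yoneda argument is short and bookkeeping-free, and automatically gives the identification as $H$-modules for the residual left action on $\cH(H)$. Your approach is more explicit: it names the actual quotient map and shows concretely why the twisted and untwisted coinvariants agree, which makes the ``equality of quotients'' statement visible rather than formal. The only point worth tightening is the one you flag yourself: when you pass to $\pi$-valued densities and apply $T$, local constancy of $h\mapsto h^{-1}\eta(h)$ uses that $\eta$ takes finitely many values on its compact support and each value is a smooth vector. Once that is said, the argument is complete.
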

\begin{proof}
Let us deduce the statement from the Yoneda lemma. Let $\tau$ be a smooth representation of $H$. Then
\begin{multline*}
\Hom_H((\cH(H) \otimes \pi)_H,\tau) = \Hom_{H\times H}(\cH(H) \otimes \pi,\tau)= (\Hom_\C(\cH(H) \otimes \pi,\tau))^{H\times H}=\\
=(\Hom_\C(\cH(H), \Hom_{\C}(\pi,\tau)))^{H\times H}=(\Hom_{\cH(H)}(\cH(H), \Hom_{\C}(\pi,\tau)))^{H}= ( \Hom_{\C}(\pi,\tau))^{H}=\Hom_H(\pi,\tau)
\end{multline*}

\end{proof}

\begin{cor}
The following sequence is exact
$$ \cH(H)_0\otimes \cH(H) \otimes \pi \to \cH(H)\otimes \pi \to \pi \to 0.$$
\end{cor}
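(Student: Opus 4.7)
The plan is to apply Lemma \ref{lem:Coinv} to the smooth $H$-representation $\cH(H)\otimes \pi$ equipped with the action $g(\mu\otimes v):=(\mu*\de_{g^{-1}})\otimes gv$ considered in the preceding lemma, and then identify the resulting space of coinvariants with $\pi$ using that lemma. The two previous lemmas are precisely the ``general'' and ``particular'' ingredients; combining them should give the corollary immediately.

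Concretely, the first step is to check that $\cH(H)\otimes \pi$ is indeed smooth under this action. This is straightforward: if $v\in\pi$ has open stabilizer $K_v<H$, and $\mu\in\cH(H)$ is right invariant under a compact open $K_\mu<H$ (which holds for every element of $\cH(H)$), then $K_\mu\cap K_v$ stabilizes $\mu\otimes v$. Once smoothness is in hand, Lemma \ref{lem:Coinv} applies and produces an identification
$$(\cH(H)\otimes\pi)_H \;=\; \Coker\bigl(\cH(H)_0 \otimes \cH(H)\otimes \pi \to \cH(H)\otimes \pi\bigr)$$
as quotients of $\cH(H)\otimes\pi$. The second step is to invoke the previous lemma, which provides the identification $(\cH(H)\otimes\pi)_H = \pi$, again as quotients of $\cH(H)\otimes\pi$. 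Splicing these two identifications together yields exactness at $\cH(H)\otimes\pi$ and surjectivity onto $\pi$, which is exactly the asserted three-term exact sequence.

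The only subtlety I anticipate is verifying that the two quotient maps $\cH(H)\otimes\pi \twoheadrightarrow \pi$ coming from the two lemmas genuinely coincide — both were produced via the Yoneda lemma, so one should confirm that the natural transformations used agree. This is a formal check rather than a real obstacle; no new ideas are needed beyond what the preceding two lemmas already supply, and the corollary is essentially immediate from them.
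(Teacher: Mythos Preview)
Your proposal is correct and matches the paper's approach exactly: the corollary is stated without proof in the paper, and it follows immediately from combining Lemma~\ref{lem:Coinv} (applied to the smooth $H$-representation $\cH(H)\otimes\pi$) with the preceding lemma identifying $(\cH(H)\otimes\pi)_H$ with $\pi$. Your smoothness check and remark about the coincidence of quotient maps are harmless extra care, not deviations.
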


\begin{proof}[Proof of Theorem \ref{thm:FunctorIsModule}, (ii)]
Let $H$ act on $\cH(H)_0\otimes \cH(H) \otimes \pi$ and $\cH(H) \otimes \pi$ by acting on the $\cH(H)$ component.
Consider the exact sequence of $H$-representations
$$ \cH(H)_0\otimes \cH(H) \otimes \pi \to \cH(H)\otimes \pi \to \pi \to 0.$$
Since $\cF$ is right exact, the sequence
$$ \cF(\cH(H)_0\otimes \cH(H) \otimes \pi) \to \cF(\cH(H)\otimes \pi) \to \cF(\pi) \to 0$$
is exact.

Since $\cF$ commutes with direct sums, the later sequence is isomorphic to
$$ \cH(H)_0\otimes \Pi \otimes \pi \to \Pi \otimes \pi \to \cF(\pi) \to 0.$$
The theorem follows now from Lemma \ref{lem:Coinv}.
\end{proof}

\subsection{Proof of Theorem \ref{thm:Mult1FunctorIsMult1Module}}\label{subapp:PfFreeFunMod}$ $

The following lemma is standard.
\begin{lemma}
Let $K$ be a compact $l$-group and $L<K$ be an open subgroup. Then there is a finite number of isomorphism classes of irreducible representations of $K$ which have an $L$-invariant vector.
\end{lemma}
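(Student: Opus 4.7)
The plan is to realize every irreducible smooth representation of $K$ with a nonzero $L$-invariant vector as an irreducible quotient of a single fixed finite-dimensional representation. First I would note that since $L$ is open in the compact group $K$, the coset space $L \backslash K$ is compact and discrete, hence finite, so $[K:L] < \infty$. I then set $V := \Ind_L^K \triv$; because $L \backslash K$ is a finite discrete set, $V$ is just the full space of functions on it, so $\dim V = [K:L]$.

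Next I would apply Frobenius reciprocity: for any smooth representation $\pi$ of $K$, one has $\Hom_K(V, \pi) \cong \Hom_L(\triv, \pi|_L) = \pi^L$. Hence if $\pi$ is an irreducible smooth representation of $K$ with $\pi^L \neq 0$, there is a nonzero (and therefore surjective, by irreducibility of $\pi$) $K$-equivariant map $V \to \pi$. Thus every such $\pi$ arises as an irreducible quotient of the fixed finite-dimensional representation $V$.

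Finally, I would observe that $V$ has only finitely many isomorphism classes of irreducible constituents. Since $V$ is finite-dimensional and smooth, the kernel of the $K$-action on $V$ is an open normal subgroup $N \lhd K$, and $V$ factors through the finite quotient $K/N$, whose representation theory is semisimple with only finitely many irreducibles (bounded in number by $\dim V = [K:L]$). Combining this with the previous paragraph proves the lemma. I do not expect any serious obstacle: the argument is the standard Frobenius-reciprocity proof from the finite-group setting, adapted to $l$-groups by exploiting the fact that an open subgroup of a compact $l$-group automatically has finite index.
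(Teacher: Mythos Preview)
Your argument is correct and is precisely the standard Frobenius-reciprocity argument one expects here; the paper itself omits the proof entirely, merely labelling the lemma as ``standard.'' There is nothing to compare.
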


\begin{cor}
Let $K$ be a compact $l$-group. Let $\pi = \prod \pi_{\sigma}$ be a product of smooth isotypic components of $K$.
Then $\pi^{\infty}=\bigoplus \pi_{\sigma}.$
\end{cor}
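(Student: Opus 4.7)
The plan is to use the preceding lemma to control which components of a smooth vector can be nonzero. Write $\pi = \prod_\sigma \pi_\sigma$ where each $\pi_\sigma$ is the $\sigma$-isotypic component (a direct sum of copies of an irreducible smooth representation of $K$ of type $\sigma$). I will prove the two inclusions $\bigoplus_\sigma \pi_\sigma \subset \pi^\infty$ and $\pi^\infty \subset \bigoplus_\sigma \pi_\sigma$ separately.

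For the inclusion $\bigoplus_\sigma \pi_\sigma \subset \pi^\infty$, I note that each isotypic component $\pi_\sigma$ is a smooth representation of $K$ (it is a direct sum of copies of a fixed smooth irreducible, and direct sums of smooth representations are smooth). Since any element of $\bigoplus_\sigma \pi_\sigma$ has only finitely many nonzero components, it lies in a finite direct sum of smooth representations, hence is itself a smooth vector in $\pi$.

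For the reverse inclusion, let $v = (v_\sigma)_\sigma \in \pi^\infty$. Then there is an open subgroup $L < K$ fixing $v$, and since the $K$-action on $\pi$ is coordinate-wise, each component $v_\sigma$ is fixed by $L$, i.e.\ $v_\sigma \in \pi_\sigma^L$. Now $\pi_\sigma^L$ is nonzero only when an irreducible representation of $K$ of isomorphism class $\sigma$ has a nonzero $L$-invariant vector. By the preceding lemma, this happens for only finitely many classes $\sigma$. Consequently $v_\sigma = 0$ for all but finitely many $\sigma$, so $v \in \bigoplus_\sigma \pi_\sigma$.

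There is no genuine obstacle here: the preceding lemma does all the work, and the argument is essentially a bookkeeping of coordinates together with the elementary fact that smoothness of a vector means invariance under \emph{some} open compact subgroup, which restricts finiteness of support to at most finitely many isotypic components.
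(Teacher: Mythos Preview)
Your proof is correct and is exactly the argument the paper has in mind: the corollary is stated without proof precisely because it follows from the preceding lemma by the two-inclusion bookkeeping you give. There is nothing to add.
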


\begin{cor}
Let $H$ be an $l$-group. Let $\pi$ and $\rho$ be smooth admissible representations of $H$. Then $$\Hom_{\C}(\pi,\rho)^{\infty} = \widetilde{\pi} \otimes \rho.$$
\end{cor}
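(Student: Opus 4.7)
The plan is to show that the canonical $H$-equivariant map
\[
\phi : \widetilde{\pi}\otimes\rho \longrightarrow \Hom_{\C}(\pi,\rho),\qquad \phi(\lambda\otimes w)(v) := \lambda(v)\,w,
\]
is an isomorphism onto $\Hom_{\C}(\pi,\rho)^{\infty}$. The $H$-equivariance (with target action $(h\cdot T)(v):=hT(h^{-1}v)$) is immediate from the definition of the contragredient action on $\widetilde{\pi}$, and the image lies in the smooth part because an elementary tensor $\lambda\otimes w$ with $\lambda$ fixed by an open compact $K_1$ and $w$ fixed by $K_2$ yields an operator fixed by $K_1\cap K_2$. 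Injectivity of $\phi$ is the standard tensor-product fact: a representative $\sum_i \lambda_i\otimes w_i$ with the $\lambda_i$ linearly independent can be separated by evaluating at suitable $v\in\pi$.

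For surjectivity, take $T\in\Hom_{\C}(\pi,\rho)^{\infty}$ fixed by some open compact $K<H$. Then $T$ is $K$-equivariant, so using smoothness and admissibility I would decompose $\pi=\bigoplus_{\sigma\in\Irr K}\pi^{(\sigma)}$ and $\rho=\bigoplus_\sigma\rho^{(\sigma)}$ into finite-dimensional $K$-isotypic components, and write $T=\bigoplus_\sigma T^{(\sigma)}$ with $T^{(\sigma)}\in\Hom_{K}(\pi^{(\sigma)},\rho^{(\sigma)})\subset(\pi^{(\sigma)})^*\otimes\rho^{(\sigma)}$. Each $T^{(\sigma)}$ then embeds canonically into $\widetilde{\pi}\otimes\rho$ by extending its functional part by zero on the other isotypic components of $\pi$, matching up with the identification $\widetilde{\pi}^{(\sigma^\vee)}\cong (\pi^{(\sigma)})^*$ provided by the natural pairing.

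The main obstacle is guaranteeing that the decomposition $T=\sum_\sigma T^{(\sigma)}$ is genuinely a \emph{finite} sum, so that $T$ really lies in the algebraic tensor product $\widetilde{\pi}\otimes\rho$ and not merely in some completion. This is exactly where the two preceding results enter decisively. The finiteness lemma bounds the number of irreducibles of a compact $l$-group $K$ admitting an $L$-fixed vector for any open subgroup $L\leq K$, and the preceding corollary identifies the smooth part of a product of smooth isotypic components with their direct sum. Applying the corollary to the $K$-representation $\Hom_K(\pi,\rho)=\prod_\sigma \Hom_K(\pi^{(\sigma)},\rho^{(\sigma)})$ collapses the product to a direct sum, and passing to a further small open subgroup $L\leq K$ (whose conductor controls $T$) together with the finiteness lemma cuts this direct sum down to finitely many nonzero terms. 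Making this last conversion---turning the abstract ``product versus direct sum'' statement into a concrete finite-support condition on $T$---is where the full force of admissibility of both $\pi$ and $\rho$ is genuinely used, and it is the step I anticipate will require the most care.
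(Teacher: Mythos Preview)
There is a genuine gap, and it traces back to the group action you put on $\Hom_{\C}(\pi,\rho)$. You use the diagonal action $(h\cdot T)(v)=hT(h^{-1}v)$, but under that action the corollary is false: take $\pi=\rho$ infinite-dimensional admissible; the identity operator is $H$-fixed, hence smooth, yet it is not of finite rank and so cannot lie in $\widetilde{\pi}\otimes\pi$. Your concern about the finiteness of the sum $\sum_\sigma T^{(\sigma)}$ is therefore not a technical wrinkle but the exact point of failure, and the remedy you sketch cannot work: once $T$ is $K$-equivariant, every factor $\Hom_K(\pi^{(\sigma)},\rho^{(\sigma)})$ carries the trivial $K$-action, so neither the isotypic-product corollary nor shrinking to an open $L\leq K$ imposes any further constraint on $T$.

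What the paper intends---as is visible in the very next corollary, where $\pi$ and $\rho$ are representations of \emph{distinct} groups $H$ and $H'$---is the two-sided $H\times H$-action $((h_1,h_2)\cdot T)(v)=h_2\,T(h_1^{-1}v)$, with $\infty$ denoting smoothness for $H\times H$. With this reading the finiteness you worried about is immediate: if $T$ is fixed by an open compact $K_1\times K_2$, then $T$ factors through $\pi_{K_1}\cong\pi^{K_1}$ and has image in $\rho^{K_2}$, both finite-dimensional by admissibility, whence $T\in(\pi^{K_1})^*\otimes\rho^{K_2}=\widetilde{\pi}^{K_1}\otimes\rho^{K_2}\subset\widetilde{\pi}\otimes\rho$. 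Equivalently, one applies the preceding corollary to the compact group $K\times K$: the $(K\times K)$-isotypic pieces of $\Hom_{\C}(\pi,\rho)$ are the finite-dimensional spaces $(\pi^{(\sigma)})^*\otimes\rho^{(\tau)}$, and replacing the product by the direct sum over pairs $(\sigma,\tau)$ is exactly the desired identification.
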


\begin{cor}
Let $H$ and $H'$ be $l$-groups.
Let $\cF:\cM(H) \to \cM(H')$ be a $\C$-linear right exact functor. Let $\Pi:=\cF(\cH(H))$.
Let $\pi$ and $\rho$ be smooth admissible representations of $H$ and $H'$ respectively.
Then $$\Hom_{H\times H'}(\Pi,\widetilde{\pi} \otimes \rho) = \Hom_{H'}(\cF(\pi),\rho).$$
\end{cor}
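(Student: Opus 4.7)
The plan is to prove the identity as a chain of canonical isomorphisms, funneling both sides through the common middle object $\Hom_{H\times H'}(\Pi\otimes\pi,\rho)$, where $H$ acts diagonally on $\Pi\otimes\pi$ and trivially on $\rho$. The two key inputs are Theorem \ref{thm:FunctorIsModule}(ii), which gives $\cF(\pi) = (\Pi\otimes\pi)_H$, and the preceding corollary, which identifies $\widetilde{\pi}\otimes\rho$ with $\Hom_\C(\pi,\rho)^\infty$.

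For the right hand side, I would begin by substituting $\cF(\pi) = (\Pi\otimes\pi)_H$ via Theorem \ref{thm:FunctorIsModule}(ii). Then, giving $\rho$ the trivial $H$-action, the universal property of coinvariants rewrites $\Hom_{H'}((\Pi\otimes\pi)_H,\rho)$ as $\Hom_{H\times H'}(\Pi\otimes\pi,\rho)$. A plain tensor-hom adjunction at the level of vector spaces, followed by taking $H\times H'$-invariants, identifies this with $\Hom_{H\times H'}(\Pi,\Hom_\C(\pi,\rho))$, where the target carries the natural $H\times H'$-action inherited from $\pi$ and $\rho$.

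For the left hand side, the preceding corollary gives $\widetilde{\pi}\otimes\rho = \Hom_\C(\pi,\rho)^\infty$, so I need that restricting to the smooth part does not lose any $H\times H'$-equivariant maps out of $\Pi$. This is immediate from Theorem \ref{thm:FunctorIsModule}(i): $\Pi$ is smooth as an $H\times H'$-representation, so every vector has an open stabilizer, and equivariance forces the image of any such vector to have the same open subgroup stabilizing it, hence to land in $\Hom_\C(\pi,\rho)^\infty$. Combining the two reductions yields the desired equality.

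I do not expect any real obstacle: the argument is formal manipulation with adjunctions. The one point requiring care is bookkeeping the two commuting group actions across the tensor-hom adjunction, in particular checking that the diagonal $H$-action on $\Pi\otimes\pi$ corresponds under the adjunction to the natural $H$-action on $\Hom_\C(\pi,\rho)$ via $\pi$, so that taking $H$-invariants on one side matches taking $H$-coinvariants on the other.
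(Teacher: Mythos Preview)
Your proposal is correct and follows essentially the same route as the paper: both arguments replace $\cF(\pi)$ by $(\Pi\otimes\pi)_H$ via Theorem~\ref{thm:FunctorIsModule}(ii), pass through the universal property of coinvariants and tensor--hom adjunction to reach $\Hom_{H\times H'}(\Pi,\Hom_\C(\pi,\rho))$, and then use smoothness of $\Pi$ together with the preceding corollary to identify the target with $\widetilde{\pi}\otimes\rho$. The only cosmetic difference is that the paper writes this as a single chain of equalities from right to left, whereas you describe it as converging from both ends.
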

\begin{proof}
\begin{multline*}
\Hom_{H'}(\cF(\pi),\rho) = \Hom_{H'}((\Pi \otimes \pi)_H, \rho) =  \Hom_{\C}((\Pi \otimes \pi)_H, \rho)^{H'}=(\Hom_{\C}(\Pi \otimes \pi, \rho)^H)^{H'}=\\=  \Hom_{\C}(\Pi \otimes \pi, \rho)^{H \times {H'}}
= \Hom_{\C}(\Pi, \Hom_{\C}(\pi, \rho))^{H \times {H'}}= \Hom_{H \times {H'}}(\Pi, \Hom_{\C}(\pi, \rho))=\\
=\Hom_{H \times {H'}}(\Pi, \Hom_{\C}(\pi, \rho)^{\infty}) = \Hom_{H\times {H'}}(\Pi,\widetilde{\pi} \otimes \rho)
\end{multline*}
\end{proof}
\begin{cor}
Theorem \ref{thm:Mult1FunctorIsMult1Module} holds.
\end{cor}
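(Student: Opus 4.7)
The plan is to derive the final corollary directly from the preceding identity
$$\Hom_{H\times H'}(\Pi,\widetilde{\pi} \otimes \rho) \;=\; \Hom_{H'}(\cF(\pi),\rho)$$
(valid for admissible $\pi\in\cM(H)$ and admissible $\rho\in\cM(H')$), combined with two standard structural facts about admissible representations of $l$-groups: passage to the contragredient preserves irreducible admissibility, and every irreducible admissible representation of $H\times H'$ is of the form $\sigma\boxtimes\rho$ with $\sigma$ and $\rho$ irreducible admissible representations of $H$ and $H'$ respectively.

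For the ``only if'' direction I would argue as follows. Suppose $\Pi=\cF(\cH(H))$ is multiplicity free as a representation of $H\times H'$. Let $\pi$ be an irreducible admissible representation of $H$ and $\rho$ an irreducible admissible representation of $H'$. Since $\widetilde{\pi}$ is again irreducible admissible, the external tensor product $\widetilde{\pi}\boxtimes \rho$ is an irreducible admissible representation of $H\times H'$. Hence by the assumed multiplicity free property of $\Pi$ and by the identity above,
$$\dim\Hom_{H'}(\cF(\pi),\rho)\;=\;\dim\Hom_{H\times H'}(\Pi,\widetilde{\pi}\boxtimes\rho)\;\le\;1,$$
so $\cF(\pi)$ is multiplicity free, and $\cF$ is a multiplicity free functor.

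For the ``if'' direction the argument is the reverse. Assume $\cF$ is multiplicity free and let $\tau$ be any irreducible admissible representation of $H\times H'$. Using the external tensor product classification, write $\tau\cong \sigma\boxtimes\rho$ with $\sigma,\rho$ irreducible admissible; then setting $\pi:=\widetilde{\sigma}$, which is again irreducible admissible, we obtain
$$\dim\Hom_{H\times H'}(\Pi,\tau)\;=\;\dim\Hom_{H\times H'}(\Pi,\widetilde{\pi}\boxtimes\rho)\;=\;\dim\Hom_{H'}(\cF(\pi),\rho)\;\le\;1$$
by the hypothesis on $\cF$. Since $\tau$ was an arbitrary irreducible admissible representation, $\Pi$ is multiplicity free.

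The main obstacle is to justify the external tensor product classification of irreducible admissible representations of $H\times H'$ in this generality; once this input (together with the stability of irreducible admissibility under $\widetilde{\,\cdot\,}$, which follows from the fourth equivalent condition in the definition of admissibility stated earlier) is in hand, the equivalence falls out immediately from the Hom-identity displayed in the preceding corollary. Everything else is purely formal.
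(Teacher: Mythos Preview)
Your argument is correct and is precisely the intended deduction: the paper gives no explicit proof of this corollary because it is meant to follow immediately from the preceding Hom-identity, and you have spelled out exactly that derivation. The two auxiliary inputs you flag --- that $\widetilde{\pi}$ is again irreducible admissible (from condition (iv) in the definition of admissibility) and that irreducible admissible representations of $H\times H'$ factor as external tensor products --- are standard and are implicitly taken for granted by the paper as well.
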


\section{Proof of Theorem \ref{thm:GK}} \label{app:GK}
\setcounter{lemma}{0}
We will use the following classical well-known lemma.
\begin{lem}
Let $H$ be an $l$-group and $\pi$ be an irreducible admissible representation of $H$. \\
(i) Let $\rho \in \cM(H)$ and $\phi: \rho \to \pi$. Then $\phi$ is an epimorphism.\\
(ii) Let $v \in \pi$. If $\psi(v)=0$ for any $\psi \in \widetilde{\pi}$ then $v = 0$.\\
(iii) $\dim \Hom(\pi,\pi)=1$.
\end{lem}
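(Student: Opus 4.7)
The plan rests on two inputs only: (a) irreducibility of $\pi$ (no proper nonzero $H$-subrepresentations), and (b) admissibility, which yields $\dim \pi^K<\infty$ for every open compact subgroup $K<H$ and the perfect pairing $\pi^K\times\widetilde{\pi}^K\to\C$ (equivalently $\widetilde{\pi}^K\cong(\pi^K)^*$, since $K$-fixed linear functionals on $\pi$ restrict bijectively to functionals on $\pi^K$, and any $K$-fixed functional is automatically smooth).

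First I would dispatch (i): the image $\Im\phi\subset\pi$ is an $H$-subrepresentation, so assuming $\phi\neq 0$ (the only case of interest) irreducibility of $\pi$ forces $\Im\phi=\pi$. For (ii), given a nonzero $v\in\pi$, smoothness of $\pi$ lets me pick an open compact $K<H$ with $v\in\pi^K$; the perfect pairing above then produces $\psi\in\widetilde{\pi}^K\subset\widetilde{\pi}$ with $\psi(v)\neq 0$.

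For (iii), given $T\in\Hom_H(\pi,\pi)$, I would observe that $T$ preserves $\pi^K$ for every open compact $K$. Choosing $K$ small enough that $\pi^K\neq 0$ (possible by smoothness and $\pi\neq 0$), admissibility makes $\pi^K$ finite-dimensional, so $T|_{\pi^K}$ has an eigenvalue $\lambda\in\C$; then $\Ker(T-\lambda\id)$ is a nonzero $H$-subrepresentation of $\pi$, forced by irreducibility to coincide with $\pi$, whence $T=\lambda\id$ and $\Hom_H(\pi,\pi)=\C$.

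Nothing in this argument is deep, and the only place where admissibility is genuinely essential (as opposed to formal Schur-type reasoning) is in (ii) and in the eigenvalue step of (iii): without the finite-dimensionality of $\pi^K$, an irreducible smooth representation of an $l$-group of large cardinality could in principle have an endomorphism algebra strictly larger than $\C$ and a smooth dual too small to separate points. So the expected main subtlety is simply to invoke admissibility at the right two places; the organization of the three items into (i) $\Rightarrow$ (ii) $\Rightarrow$ (iii) is then automatic, with (i) reused inside (iii).
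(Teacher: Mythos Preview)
Your argument is correct and is the standard one. Note, however, that the paper does not actually prove this lemma: it is labeled ``classical well-known'' and stated without proof, so there is nothing to compare against beyond observing that your approach is the expected textbook argument (irreducibility for (i), the isomorphism $\widetilde{\pi}^K\cong(\pi^K)^*$ from admissibility for (ii), and the finite-dimensional eigenvalue trick for (iii)). One small remark: as written, part (i) of the lemma omits the hypothesis $\phi\neq 0$, and you rightly supply it; in the paper's application (the proof of Theorem~\ref{thm:GK}) the morphism is explicitly taken nonzero, so this is harmless.
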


\begin{proof}[Proof of Theorem \ref{thm:GK}]
If $\Hom (\Sc(X,\cF), \pi)=0$ we are done. Otherwise let $\phi \in \Hom (\Sc(X,\cF), \pi) - 0$.
Let $\psi_1, \psi_2 \in \Hom (\Sc(Y,\cG), \widetilde{\pi})$. Let us show that they are dependent.
If one of them is zero we are done, so we assume the contrary.

Define bilinear forms $\xi_i:\Sc(X,\cF)\otimes \Sc(Y,\cG) \to \C$ by $$\xi_i(f\otimes h):=\langle \psi_i(h),\phi(f) \rangle .$$
Let $V_{i}$ be left kernels of $\xi_i$, i.e.
$$V_i=\{f \in \Sc(X,\cF) \, | \, \forall h\in \Sc(Y,\cG). \, \xi_i(f\otimes h)=0\}.$$
By the previous lemma, $V_i = \Ker \phi$ and hence $V_1 = V_2$. Let  $W_i$ be the right kernels of $\xi_i$. Again, the previous lemma implies that $W_i = \Ker \psi_i$. Now, consider $\xi_i$ as elements of $\Sc^*(X \times Y,\cF \boxtimes \cG)$. Clearly they are $H$-invariant. Hence, by the assumption of the theorem, $\xi_i$ are invariant with respect to $T$. Hence $W_i = \tau_*V_i$. Hence $W_1=W_2$ and by the previous lemma $\psi_1$ is proportional to $\psi_2$. This implies that $\dim \Hom (\Sc(Y,\cG)), \widetilde{\pi}) \leq 1$. Similarly  $\dim \Hom (\Sc(X,\cF)), \widetilde{\pi}) \leq 1$.
\end{proof}

\end{document}